\newtheorem{coro}{Corollary}
\newtheorem{defin}{Definition}
\newtheorem{theorem}{Theorem}[section]
\newtheorem{lemma}[subsection]{Lemma}
\newtheorem{proposition}[theorem]{Proposition}
\newtheorem{remark}[theorem]{Remark}
\newtheorem{definition}[theorem]{Definition}
\newcommand{\abs}[1]{\left\vert #1 \right\vert}
\newtheorem{theoremletter}{Theorem}
\DeclareMathOperator{\Ricc}{Ric}
\begin{document}

\numberwithin{equation}{section}

\title{Rigidity results for Serrin’s overdetermined problems in Riemannian manifolds}


\author{Maria Andrade$^1$, Allan Freitas$^2$, and Diego A. Marín$^3$}
\footnotetext[1]{Departamento de Matem\'atica, Universidade Federal de Sergipe, DMA,
	 s/n, CEP 49100-000, S\~ao Crist\'ov\~ao, SE, Brazil. \textsf{maria@mat.ufs.br}}

\footnotetext[2]{
Departamento de Matem\'atica, Universidade Federal da Para\'iba, Cidade Universit\'aria, 58051-900, João Pessoa, Para\'iba, Brazil. \textsf{allan.freitas@academico.ufpb.br}}

\footnotetext[3]{ Department of Geometry and Topology and Institute of Mathematics (IMAG), University of
Granada, 18071, Granada, Spain. \textsf{damarin@ugr.es}}

\date{}

\maketitle{}
\begin{abstract}
In this work, we are interested in studying Serrin’s overdetermined problems in Riemannian manifolds. For manifolds endowed with a conformal vector field, we prove a Pohozaev-type identity to establish a Serrin-type rigidity result using the P-function approach introduced by Weinberger. To achieve this, we proceed with a conformal change, starting from a geometric identity due to Schoen. Moreover, we obtain a symmetry result for the associated Dirichlet problem by utilizing a generalized normalized wall shear stress bound.
\end{abstract}

\vspace{0.2cm} \noindent \emph{2020 Mathematics Subject
Classification} : 35R01; 35N25; 53C24

\vspace{0.4cm}\noindent \emph{Keywords}: Serrin's type problem, Rigidity, $P$-function, Conformal fields.


\section{Introduction}
\indent \par In 1971, Serrin \cite{serrin} began studying the following problem:
\begin{eqnarray}
\Delta u  &=& -1 \ \ \hbox{in}\quad \Omega, \quad u=0\quad \hbox{on}\quad  \partial\Omega, \label{serrinok1}\\
|\nabla u|&=&c\quad \hbox{on}\quad  \partial\Omega,\label{serrinok2}    
\end{eqnarray}
where $\Omega \subset \mathbb{R}^n$ is a bounded domain with boundary $\partial \Omega$ of class $C^2$, and $c$ is a positive constant.

When studying a system such as the one described above, where both Dirichlet data \eqref{serrinok1} and Neumann data \eqref{serrinok2} are prescribed, one encounters a so-called overdetermined elliptic problem (OEP). This type of problem frequently arises in physical models and free boundary problems (see \cite{EMa,EspMaz,rayleigh}). Solutions to such problems are expected to be rare. A notable solution in this context is the Euclidean ball $B(0, R)$, with the radial function
\begin{equation}\label{class_solution}
u(x)=\frac{R^2-|x|^2}{2n}.    
\end{equation}
In fact, Serrin proved that the only solution to \eqref{serrinok1}-\eqref{serrinok2}, up to translations, is this one, using the method of moving planes. Serrin's moving planes method was inspired by the proof of Alexandrov’s Theorem (see \cite{Alexandrov}). Since Serrin's seminal work, the radial symmetry of solutions and their respective domains has been thoroughly investigated for certain overdetermined elliptic systems (see \cite{BNST,ciraolo, garofalo, kumpraj, NT, SS, wein} and references therein, for example).
It is worth mentioning that this overdetermined system has its origins in fluid mechanics, specifically in a problem related to the flow of viscous incompressible fluids through pipes, as first studied by Serrin \cite[Page 1]{serrin}. The physical interpretation of terms such as the wall shear stress (WSS) arises from this context. For further details regarding the terminology and its physical significance, we refer the reader to the work by Agostiniani et al \cite{ABM}.

A natural question arises immediately in this context: how can we approach similar rigidity results for solutions of overdetermined problems like this in Riemannian manifolds? In this setting, the moving plane method works in manifolds with a certain symmetry, such as the space forms, where a rigidity result in this same sense could be obtained (see \cite{kumpraj}). However, it is not expected that the method of moving planes will work in general Riemannian manifolds.

Luckily, in the same edition of the above-mentioned Serrin's paper, Weinberger \cite{wein} provided a simpler proof (which also inspired some extensions, as we will mention shortly) using an integral technique. In short, he defined a so-called \textit{$P$-function}, \( P(u) = |\nabla u|^2 + \frac{2}{n}u \), associated with the solution \( u \) of \eqref{serrinok1}-\eqref{serrinok2}. The \( P \)-function satisfies \( P(u) \equiv c^2 \) on \( \partial \Omega \) and is subharmonic; more precisely, \( \Delta P(u) = 2|\mathring{\nabla}^2 u|^2 \geq 0 \), where $\mathring{\nabla}^2 u = {\nabla}^2 u - \left(\frac{\Delta u}{n}\right)I$, and $I$ represents the identity matrix of size $n \times n$. The strong maximum principle applied to \( P \) implies that either \( P(u) \equiv c^2 \) in \( \overline{\Omega} \) or \( P(u) < c^2 \) in \( \Omega \). The second case would lead to a contradiction with the \textit{classical Pohozaev identity} applied to \eqref{serrinok1}-\eqref{serrinok2} (see, e.g., Struwe~\cite[page~171]{struwe}). This means that \( P(u) \equiv c^2 \) in \( \overline{\Omega} \) and, since \( \mathring{\nabla}^2 u = 0 \), \( u \) is a radial function and \( \Omega \) is a ball. This approach has inspired many works on more general elliptic partial equations (see, for example, \cite{ciraolo, fragala, gao, garofalo}).

We remark that the main ingredients behind Weinberger's \( P \)-function approach are a maximum principle for a suitable \( P \)-function and a Pohozaev-type identity. These two tools have been identified as essential for studying Serrin-type problems in the context of Riemannian manifolds. In this direction, Ciraolo and Vezzoni \cite{ciraolo} used the \( P \)-function approach to prove the analog of Serrin's theorem for the problem
\begin{eqnarray}
\Delta u + nku  &=& -1 \quad \text{in} \quad \Omega, \quad u = 0 \quad \text{on} \quad \partial \Omega, \label{serrinoka}\\
|\nabla u| &=& c \quad \text{on} \quad \partial \Omega, \label{serrinokb}    
\end{eqnarray}
where \( \Omega \subset M \) is a bounded domain and \( M = \mathbb{H}^n, \mathbb{R}^n, \mathbb{S}^n_+ \), when \( k = -1, 0, 1 \), respectively. Farina and Roncoroni \cite{farina-roncoroni} studied this same problem on warped products, obtaining rigidity results by considering such ambients with Ricci curvature bounded below. More recently, Roncoroni, Santos, and the second named author \cite{freitas} have studied the Serrin-type problem \eqref{serrinoka}--\eqref{serrinokb} on manifolds endowed with a closed conformal vector field. We mention that, in these works, the natural \( P \)-function is given by
\begin{eqnarray}\label{pfunction}
P(u) = |\nabla u|^2 + \dfrac{2}{n}u + ku^2,
\end{eqnarray}
which is subharmonic when \( u \) solves \eqref{serrinoka}--\eqref{serrinokb} and \( Ric_g \geq (n-1)kg \), as a consequence of the Bochner identity (see \cite[Lemma 5]{farina-roncoroni}).

The primary objective of this work is to extend existing rigidity results for a specific class of manifolds endowed with a conformal vector field. Let \( (M^n, g) \) denote a Riemannian manifold. A vector field \( X \) is called a conformal vector field if its flow consists of conformal transformations. Equivalently, there exists a smooth function \( \psi \) on \( M \), referred to as the conformal factor of \( X \), such that the Lie derivative of the metric with respect to \( X \) satisfies the equation \( \mathcal{L}_X g = 2\psi g \). In this context, we establish the following result:
\begin{theoremletter}\label{theorem-A}
    Let \( (M^n, g) \) be a Riemannian manifold satisfying \( \operatorname{Ric}_g \geq (n-1)kg \), and endowed with a conformal vector field \( X \) with a positive (or negative) conformal factor \( \psi \). If \( \Omega \subset M \) is a bounded domain and \( u \) is a solution of \eqref{serrinoka}--\eqref{serrinokb}, satisfying the following compatibility condition:
    \begin{equation}\label{main1}
    \int_{\Omega} u^2 \left[ \psi(-R_g + n(n-1)k) - \frac{1}{2}X(R_g) \right]dV_{g} \geq (\leq) 0,
    \end{equation}
    then \( \Omega \) is a metric ball, and \( u \) is a radial function.
\end{theoremletter}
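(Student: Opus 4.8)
The plan is to run Weinberger's $P$-function argument in this conformal setting; the genuinely new ingredient is a Pohozaev-type identity adapted to $X$ whose remainder term is exactly the integrand of \eqref{main1}. Working with the $P$-function \eqref{pfunction}, $P(u)=|\nabla u|^{2}+\frac{2}{n}u+ku^{2}$, I would first record the maximum-principle half: feeding $\Delta u=-1-nku$ into the Bochner formula and invoking $\operatorname{Ric}_g\ge (n-1)kg$ yields
\[
\Delta P(u)=2\,|\mathring{\nabla}^2 u|^{2}+2\big(\operatorname{Ric}_g(\nabla u,\nabla u)-(n-1)k\,|\nabla u|^{2}\big)\ge 0,
\]
which is the subharmonicity already quoted from \cite{farina-roncoroni}. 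Since $u=0$ and $|\nabla u|=c$ on $\partial\Omega$ we have $P\equiv c^{2}$ there, so the strong maximum principle forces the dichotomy $P\equiv c^{2}$ on $\overline{\Omega}$ or $P<c^{2}$ throughout $\Omega$. In particular $c^{2}-P\ge 0$ everywhere, and the whole problem reduces to ruling out the strict inequality.

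The crux is the second step: I would derive the Pohozaev identity by testing $\Delta u+nku=-1$ against the transport term $X(u)=\langle X,\nabla u\rangle$ and integrating by parts. The conformal hypothesis enters through the facts that the symmetric part of $\nabla X$ equals $\psi g$ and $\operatorname{div}X=n\psi$; these convert every $X$-derivative into a zeroth-order term, and the boundary contributions collapse, using $u|_{\partial\Omega}=0$ and $|\nabla u|_{\partial\Omega}=c$, to a multiple of $c^{2}\int_{\partial\Omega}\langle X,\nu\rangle\,dS=nc^{2}\int_{\Omega}\psi\,dV_g$. Pairing this with the auxiliary relation obtained by testing the equation against $\psi u$ (which manufactures the $\int_{\Omega}u^{2}\Delta\psi$ term) should eliminate every first-order quantity in favour of $\psi P$, landing on the clean identity
\[
\int_{\Omega}\psi\,\big(c^{2}-P(u)\big)\,dV_g=-\frac{n-1}{n}\int_{\Omega}u^{2}\big(\Delta\psi+nk\psi\big)\,dV_g .
\]
As a consistency check, for $M=\mathbb{R}^{n}$, $k=0$, with $X$ the position field ($\psi\equiv 1$), the right-hand side vanishes and one recovers Weinberger's $\int_{\Omega}(c^{2}-P)\,dV=0$.

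Third, I would connect the right-hand side to \eqref{main1} through the transformation law for scalar curvature under the flow of a conformal field — the Schoen-type conformal identity advertised in the abstract — namely $X(R_g)=-2\psi R_g-2(n-1)\Delta\psi$. Substituting this collapses the bracket $\psi(-R_g+n(n-1)k)-\tfrac{1}{2}X(R_g)$ to exactly $(n-1)(\Delta\psi+nk\psi)$, so \eqref{main1} says precisely that $\int_{\Omega}u^{2}(\Delta\psi+nk\psi)\,dV_g\ge 0$ (resp. $\le 0$). Combined with the displayed identity this gives $\int_{\Omega}\psi(c^{2}-P)\,dV_g\le 0$ (resp. $\ge 0$); since $\psi$ is nowhere zero and of one sign while $c^{2}-P\ge 0$, the integrand $\psi(c^{2}-P)$ is single-signed, so the integral vanishes and $P\equiv c^{2}$ on $\overline{\Omega}$. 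Then $\Delta P\equiv 0$, and the Bochner computation above forces $\mathring{\nabla}^2 u\equiv 0$; the vanishing traceless Hessian makes the level sets of $u$ totally umbilical, from which the standard argument identifies $\Omega$ as a metric ball and $u$ as radial.

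I expect the main obstacle to be the bookkeeping in the second step: one must use the \emph{full} conformal structure (not merely $\operatorname{div}X=n\psi$) together with the correct auxiliary identity, so that every first-order term cancels and the leftover is exactly $\Delta\psi+nk\psi$. Any stray constant would destroy the match with \eqref{main1} that the curvature identity $X(R_g)=-2\psi R_g-2(n-1)\Delta\psi$ is precisely engineered to produce. The closing passage from $\mathring{\nabla}^2 u\equiv 0$ to ``$\Omega$ is a metric ball'' is comparatively routine, but I would still handle the connectedness of $\Omega$ and the closing-up of the umbilic foliation with care.
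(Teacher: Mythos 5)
Your argument is correct, and its skeleton --- subharmonicity of the $P$-function \eqref{pfunction}, the maximum-principle dichotomy, a Pohozaev-type identity whose remainder is the integrand of \eqref{main1}, and the Obata-type rigidity --- coincides with the paper's. The one real divergence is in how the Pohozaev identity is obtained: the paper's main text performs the conformal change $\bar g=u^{4/(n-2)}g$ and applies the Pohozaev--Schoen identity \eqref{pschoen} to the traceless Ricci tensor of $\bar g$, which requires $n\ge 3$ and a limiting argument on $\Omega_\varepsilon=\Omega\setminus u^{-1}([0,\varepsilon))$, whereas your plan of multiplying $\Delta u+nku=-1$ by $\langle X,\nabla u\rangle$ and integrating by parts is essentially the route of Appendix \ref{alternative}, valid for all $n\ge 2$. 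Your packaged identity
\[
\int_{\Omega}\psi\bigl(c^{2}-P(u)\bigr)\,dV_g=-\frac{n-1}{n}\int_{\Omega}u^{2}\bigl(\Delta\psi+nk\psi\bigr)\,dV_g
\]
is exactly what one obtains by combining Proposition \ref{poho_type} with the computation \eqref{eq2} of $\int_\Omega\psi|\nabla u|^2$, and via Lemma \ref{lemma1} (your $X(R_g)=-2\psi R_g-2(n-1)\Delta\psi$) the bracket in \eqref{main1} does collapse to $(n-1)(\Delta\psi+nk\psi)$, so your sign argument closes the proof; the paper instead keeps the two ingredients separate and derives a strict-inequality contradiction, which is equivalent. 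Two points you dismiss as ``bookkeeping'' are where the full conformal structure genuinely enters and should be spelled out: (i) the curvature identity you invoke is not free --- it is Lemma \ref{lemma1}, proved from Lemma \ref{lemma 2.1} and the contracted Bianchi identity; and (ii) in the integration by parts the skew-symmetric part $\varphi$ of $\nabla X$ survives in the term $\sum_i\langle\nabla\varphi(E_i,E_i),\nabla u\rangle$, whose integral vanishes only because $\sum_i\operatorname{div}(\nabla\varphi(E_i,E_i))=0$ by the skew-symmetry of $\varphi$ against the symmetry of $\Ricc$ (identity \eqref{div_0}). Finally, in the closing step note that $\Delta P\equiv 0$ yields both $\mathring{\nabla}^2u\equiv 0$ and $\Ricc_g(\nabla u,\nabla u)=(n-1)k|\nabla u|^2$, and it is the Obata-type Lemma \ref{farina-roncoroni} that converts this into ``$\Omega$ is a metric ball and $u$ is radial,'' rather than a bare umbilic-foliation argument.
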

In particular, we have the following result for Einstein manifolds:
\begin{coro}\label{corollary-1}
    Let \( (M^n, g) \) be an Einstein manifold with \( \operatorname{Ric}_g = (n-1)k g \), for some \( k \in \mathbb{R} \). Additionally, assume that \( M \) is endowed with a conformal vector field \( X \) whose conformal factor does not change sign. If \( u \) is a solution of \eqref{serrinoka}--\eqref{serrinokb} for a certain domain \( \Omega \), then \( \Omega \) is a metric ball, and \( u \) is a radial function.
\end{coro}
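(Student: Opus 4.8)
The plan is to obtain Corollary~\ref{corollary-1} as a direct specialization of Theorem~\ref{theorem-A}: since the Einstein hypothesis already supplies a conformal vector field with conformal factor of constant sign and the Ricci bound $\operatorname{Ric}_g = (n-1)kg \geq (n-1)kg$ (in fact with equality), the only thing I need to check is that the compatibility condition \eqref{main1} holds automatically. I would verify this by showing that the entire integrand in \eqref{main1} vanishes identically, so that the integral is zero and thus satisfies both the $\geq 0$ and the $\leq 0$ alternatives at once.

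The key computation is elementary. Taking the $g$-trace of $\operatorname{Ric}_g = (n-1)kg$ gives the scalar curvature
\begin{equation*}
R_g = g^{ij}(\operatorname{Ric}_g)_{ij} = (n-1)k\,g^{ij}g_{ij} = n(n-1)k,
\end{equation*}
which is a constant because $k \in \mathbb{R}$ is fixed. Two consequences follow immediately: first, $X(R_g) = 0$, since $X$ differentiates a constant function; and second, $-R_g + n(n-1)k = -n(n-1)k + n(n-1)k = 0$. Substituting both facts into the bracketed term of \eqref{main1}, I obtain
\begin{equation*}
u^2\left[\psi\bigl(-R_g + n(n-1)k\bigr) - \tfrac{1}{2}X(R_g)\right] = u^2\bigl[\psi\cdot 0 - \tfrac{1}{2}\cdot 0\bigr] = 0
\end{equation*}
pointwise on $\Omega$, so the integral in \eqref{main1} is exactly zero.

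With the compatibility condition now verified (with equality), every hypothesis of Theorem~\ref{theorem-A} is in place, and I would conclude directly that $\Omega$ is a metric ball and $u$ is a radial function. There is no real obstacle to overcome: the corollary is a clean consequence, and the only point deserving a moment of care is the observation that the Einstein condition with constant $k$ forces $R_g$ to be constant, which is precisely what makes both terms inside the bracket of \eqref{main1} vanish identically rather than merely integrate to a quantity of a definite sign.
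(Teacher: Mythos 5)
Your proposal is correct and follows exactly the paper's argument: the Einstein condition forces $R_g = n(n-1)k$ to be constant, so both $X(R_g)$ and $-R_g + n(n-1)k$ vanish, the integrand in \eqref{main1} is identically zero, and Theorem~\ref{theorem-A} applies. The paper states this more tersely, but the reasoning is the same.
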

The last results extend known results in several directions. For example, Corollary \ref{corollary-1} is a generalization of Ciraolo and Vezzoni's result \cite{ciraolo} for space forms (manifolds with constant sectional curvature that possess a vector field with such properties). Furthermore, the compatibility condition \eqref{main1} is exactly the same condition observed by Farina and Roncoroni in the specific case of warped products, which also represent a type of manifold that satisfies these conditions (see \cite[Theorem 3]{farina-roncoroni}).

Our method for deriving a suitable Pohozaev-type identity in this context begins with a geometric identity established by Schoen (\cite{schoen}) and involves making an appropriate conformal change that depends on the solution \( u \). This approach allows us to obtain a new Pohozaev-type identity that is interesting in its own right, and we believe it could be applied to other overdetermined problems (see Proposition \ref{poho_type}). We emphasize that, to the best of our knowledge, this method for deriving a Pohozaev identity in the Riemannian context for Serrin's problem is distinct from the other works cited in this field.

Returning to the classical Serrin's problem, we note that the overdetermined condition \( |\nabla u| = c \) on \( \partial \Omega \) is sufficiently strong to prescribe the entire domain. To relax this condition, Agostiniani, Borghini, and Mazzieri (\cite[Section 2]{ABM}) studied the problem for domains \( \Omega \subset \mathbb{R}^2 \) and defined the concept of normalized wall shear stress (NWSS), using it to obtain a rigidity result for the ball in this setting. We will generalize this concept to any dimension and Riemannian manifolds to state our second result. We use the notation \( \pi_{0}(A) \) to denote the collection of connected components of a given set \( A \). Additionally, let \( u_{\max} = \max_{\Omega} u \) and \( \text{Max}(u) = \{ p \in \Omega \mid u(p) = u_{\max} \} \).

\begin{defin}\label{GNWSS}
Let \( (\Omega,u) \) be a solution to the problem \eqref{serrinoka} and let \( \Gamma \) be a connected component of \( \partial \Omega \). We define the \textup{generalized normalized wall shear stress} (GNWSS) of \( \Gamma \) as
\begin{equation}\label{GNWSSboundary}
    \overline{\tau}(\Gamma)=\frac{\max_{\Gamma} \abs{\nabla u}^2}{k u_{\textup{max}}^2+\frac{2}{n}u_{\textup{max}}}.
\end{equation}
More generally, if \( N \in \pi_0 (\Omega \setminus \textup{Max}(u)) \), we define the generalized normalized wall shear stress of \( N \) as 
\begin{equation}\label{GNWSSregion}
    \overline{\tau}(N)= \max \left\{ \overline{\tau}(\Gamma) ~\colon~ \Gamma \in \pi_{0} (\partial \Omega \cap \overline{N}) \right\}.
\end{equation}
\end{defin}
A simple calculation shows that the GNWSS for the classical solution \eqref{class_solution} is precisely \(\bar{\tau}(\Omega\setminus\left\{0\right\}) = 1\), where $\textup{Max}(u)$ is the origin.

With this notation, we can state the following rigidity result for real analytic Riemannian manifolds:
\begin{theoremletter}\label{theorem-B}
 Let $(M,g)$ be a real analytic Riemannian manifold of dimension $n \geq 2$, and let $(\Omega, u)$ be a solution to the problem \eqref{serrinoka}, where $\Omega$ is a bounded domain with $C^2$-boundary in $M$. Let $N \in \pi_{0}(\Omega \setminus \textup{Max}(u))$ such that $\mbox{Ric}_g\geq (n-1)kg$ inside $N$ and $\overline{\tau}(N) \leq 1$. Then, $\Omega$ is a metric ball and $u$ is a radial function.   
\end{theoremletter}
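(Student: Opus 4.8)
The plan is to run a maximum-principle argument for the $P$-function $P(u)=|\nabla u|^2+\frac{2}{n}u+ku^2$ from \eqref{pfunction}, using that the GNWSS condition $\overline\tau(N)\le1$ is exactly a normalized comparison between the boundary values of $P$ and its value at an interior maximum of $u$. Write $P_{\max}:=ku_{\max}^2+\frac{2}{n}u_{\max}$; since $\nabla u=0$ on $\textup{Max}(u)$, this is precisely the value of $P$ at every maximum point. First I would record, from the Bochner identity exactly as in \cite[Lemma 5]{farina-roncoroni}, that $\Delta P=2|\mathring{\nabla}^2u|^2+2\bigl(\operatorname{Ric}_g(\nabla u,\nabla u)-(n-1)k|\nabla u|^2\bigr)$ on $N$, so the hypothesis $\operatorname{Ric}_g\ge(n-1)kg$ inside $N$ makes $P$ subharmonic there, with $\Delta P\ge 2|\mathring{\nabla}^2u|^2\ge0$.

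Next I would determine the boundary behavior of $P$ on $\overline N$. Since $\partial N\subseteq(\partial\Omega\cap\overline N)\cup(\textup{Max}(u)\cap\overline N)$, on the first piece $u=0$ gives $P=|\nabla u|^2$, and $\overline\tau(N)\le1$ says exactly that $\max_\Gamma|\nabla u|^2\le P_{\max}$ for each component $\Gamma\in\pi_0(\partial\Omega\cap\overline N)$; on the second piece $P\equiv P_{\max}$. Hence $P\le P_{\max}$ on $\partial N$, and the weak maximum principle gives $P\le P_{\max}$ throughout $\overline N$.

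The crux is to upgrade this to $P\equiv P_{\max}$ on $N$. By the strong maximum principle, either $P\equiv P_{\max}$ or $P<P_{\max}$ on $N$, and I would exclude the latter by a Hopf-lemma argument anchored on $\textup{Max}(u)$. Note first that $\textup{Max}(u)\cap\overline N\neq\emptyset$: otherwise $N$ would be open and closed in the connected set $\Omega$, forcing $\textup{Max}(u)=\emptyset$. Fixing $p^*\in\textup{Max}(u)\cap\overline N$ and choosing $q\in N$ near $p^*$, one has $\operatorname{dist}(q,\textup{Max}(u))\to0$ while $\operatorname{dist}(q,\partial\Omega)$ stays bounded below by $\operatorname{dist}(p^*,\partial\Omega)>0$ (every maximum point is interior since $u>0$ in $\Omega$); hence for such $q$ the geodesic ball $B=B\bigl(q,\operatorname{dist}(q,\textup{Max}(u))\bigr)$ lies in $N$ and touches $\textup{Max}(u)$ at a point $p_0$, yielding an interior-ball condition for $N$ at $p_0$. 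If $P<P_{\max}$ on $N$, Hopf's lemma forces the outward normal derivative $\partial_\nu P(p_0)>0$; but $\nabla P=2\nabla^2u(\nabla u,\cdot)+\bigl(\tfrac{2}{n}+2ku\bigr)\nabla u$ vanishes wherever $\nabla u=0$, in particular at $p_0\in\textup{Max}(u)$, a contradiction. Thus $P\equiv P_{\max}$ on $N$, and the Bochner identity then forces $\mathring{\nabla}^2u\equiv0$ on $N$.

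Finally I would globalize. As $(M,g)$ and the equation are real analytic, $u$ and the tensor $\mathring{\nabla}^2u$ are real analytic; vanishing on the open set $N$, the latter vanishes on all of the connected domain $\Omega$ by unique continuation for real analytic functions. Hence $\nabla^2u=\varphi(u)\,g$ on $\Omega$ with $\varphi(t)=-\tfrac{1}{n}-kt$, so $u$ is a concircular function: along any unit-speed geodesic $\gamma$ issuing from the maximum point $p_0$, the function $f(s)=u(\gamma(s))$ solves $f''=\varphi(f)$, $f(0)=u_{\max}$, $f'(0)=0$, independently of the direction of $\gamma$. This forces $u$ to be radial about $p_0$, its level sets to be geodesic spheres, and $\Omega=\{u>0\}$ to be the metric ball $B(p_0,R)$ with $R$ the first zero of $f$; the degenerate possibility $\varphi(u_{\max})=0$ is automatically excluded, since it would make $f$ constant and hence contradict $u|_{\partial\Omega}=0$. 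I expect the main difficulty to be the middle step: producing a genuine interior-ball condition along the a priori irregular set $\textup{Max}(u)$ so that Hopf's lemma applies, and it is precisely real analyticity that both propagates $\mathring{\nabla}^2u\equiv0$ from $N$ to $\Omega$ and guarantees (via unique continuation) that $\textup{Max}(u)$ has empty interior, so that $N$ is nonempty and the scheme is not vacuous.
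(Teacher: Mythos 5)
Your argument is correct, but it takes a genuinely different route from the paper's. Both proofs begin identically: subharmonicity of $P$ on $N$ via Bochner, and the bound $P\le ku_{\max}^2+\tfrac{2}{n}u_{\max}$ on $N$ deduced from $\overline\tau(N)\le 1$ and the maximum principle (this is exactly the paper's inequality \eqref{boundP}). From there the paper does not work with $P$ directly any more: it introduces the energy function $U(t)=\beta(t)^{-n/2}\int_{\{u=t\}\cap\overline N}|\nabla u|\,d\sigma_g$, proves it is non-increasing (Proposition \ref{propEnergy}) via the divergence theorem and the bound on $P$, and then shows that if $\overline N\neq\overline\Omega$ the Lojasiewicz structure theorem forces $\textup{Max}(u)\cap\partial N$ to contain an analytic hypersurface of positive $(n-1)$-measure, whence the Lojasiewicz gradient inequality makes $U(t)$ blow up as $t\to u_{\max}^-$; the resulting contradiction gives $\overline N=\overline\Omega$, and Lemma \ref{lemmaDisk} concludes. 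You instead upgrade $P\le P_{\max}$ to $P\equiv P_{\max}$ on $N$ directly, by a Hopf boundary-point argument at a contact point $p_0$ of an interior geodesic ball $B\subset N$ with $\textup{Max}(u)$, exploiting that $\nabla P=2\nabla^2u(\nabla u,\cdot)+\bigl(\tfrac{2}{n}+2ku\bigr)\nabla u$ vanishes wherever $\nabla u$ does; you then propagate $\mathring{\nabla}^2u\equiv0$ from $N$ to all of $\Omega$ by analytic unique continuation and finish with the Obata--Tashiro ODE argument. Your route is shorter and avoids the Lojasiewicz machinery entirely; analyticity enters only through unique continuation and through the fact that $\textup{Max}(u)$ has empty interior. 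The paper's route proves the stronger intermediate statement $\overline N=\overline\Omega$ together with the monotonicity of $U$, which is the tool that extends to the quantitative results in the Agostiniani--Borghini--Mazzieri line of work. If you write your version up, make explicit that the radius of $B$ is taken below both the injectivity radius and $\operatorname{dist}(q,\partial\Omega)$ so that $B$ is a smooth embedded ball contained in $N$, that $P$ is $C^1$ at $p_0$ (an interior point of $\Omega$) so the Hopf conclusion $\partial_\nu P(p_0)>0$ genuinely contradicts $\nabla P(p_0)=0$, and that the nondegeneracy $1+nku_{\max}>0$ used at the last step is the paper's Lemma \ref{lemmaMax}.
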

A simple consequence of this result is the generalization of \cite[Theorem 2.1]{ABM} when $(M,g)$ is an Einstein manifold. Note that all Einstein manifolds are real analytic (see \cite{besse}).
\begin{coro}\label{corollary-2}
    Let $(M,g)$ be an Einstein manifold of dimension $n \geq 2$ with $\mbox{Ric}_g = (n-1)kg$, for some \( k \in \mathbb{R} \), and let $(\Omega, u)$ be a solution to the problem \eqref{serrinoka}, where $\Omega$ is a bounded domain with $C^2$-boundary in $M$. Let $N \in \pi_{0}(\Omega \setminus \textup{Max}(u))$ such that $\overline{\tau}(N) \leq 1$. Then, $\Omega$ is a metric ball and $u$ is a radial function.   
\end{coro}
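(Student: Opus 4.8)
The plan is to obtain this statement as a direct specialization of Theorem \ref{theorem-B}, by checking that each of its hypotheses is automatically satisfied once $(M,g)$ is assumed Einstein. Theorem \ref{theorem-B} requires three things beyond the common setup (a solution $(\Omega,u)$ of \eqref{serrinoka} on a bounded $C^2$-domain and a choice of $N \in \pi_0(\Omega \setminus \operatorname{Max}(u))$): real analyticity of $(M,g)$, the Ricci lower bound $\operatorname{Ric}_g \geq (n-1)kg$ inside $N$, and the GNWSS bound $\overline{\tau}(N) \leq 1$. The domain, the solution, and the choice of $N$ transfer verbatim, and the bound $\overline{\tau}(N)\leq 1$ is assumed outright in the corollary, so neither requires any further argument.

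For the curvature hypothesis, I would simply observe that the Einstein condition $\operatorname{Ric}_g = (n-1)kg$ holds pointwise on all of $M$, hence in particular throughout $N$; reading the equality as an identity of symmetric bilinear forms, it trivially implies the one-sided inequality $\operatorname{Ric}_g \geq (n-1)kg$ required inside $N$ (realized here as an equality everywhere). For analyticity, I would invoke the classical regularity theory for Einstein metrics recorded in Besse \cite{besse}: in harmonic coordinates the Einstein equation becomes an elliptic system whose coefficients depend analytically on the metric, which forces $g$ to be real analytic in a suitable atlas. This places $(M,g)$ squarely within the class of manifolds to which Theorem \ref{theorem-B} applies.

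With all three hypotheses verified, Theorem \ref{theorem-B} applies and immediately yields that $\Omega$ is a metric ball and $u$ is radial, which is the desired conclusion. I do not anticipate any genuine obstacle, since the entire mathematical content is already contained in Theorem \ref{theorem-B}; the corollary amounts to recognizing that Einstein manifolds lie inside its hypothesis class. The only point meriting a word of care is the analyticity assertion, whose justification rests on the cited regularity theorem rather than on anything particular to the overdetermined problem at hand.
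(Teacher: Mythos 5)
Your proposal is correct and matches the paper's own proof: the paper likewise invokes the real analyticity of Einstein metrics via \cite[Theorem 5.26]{besse} and then applies Theorem \ref{theorem-B} directly, the Ricci bound and GNWSS hypothesis being immediate. No issues.
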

This paper is organized as follows. In Section \ref{sec3}, we fix some notations and prove auxiliary results regarding conformal vector fields. In Section \ref{sec_poho}, we establish a Pohozaev-type identity in Riemannian manifolds endowed with a conformal vector field using the conformal method. Then, we prove Theorem \ref{theorem-A} in Section \ref{sec4}. In Section \ref{sec5}, we consider a positive solution to the Dirichlet problem, define the generalized normalized wall shear stress, and prove Theorem \ref{theorem-B}. Finally, in the Appendix \ref{alternative}, we will give another proof of the Pohozaev-type identity, following a similar approach to \cite{freitas}. We note that our initial proof of the Pohozaev-type formula (Proposition \ref{poho_type}) is valid only in dimensions \(n \geq 3\) due to the proposed conformal change. However, as demonstrated in Appendix A, the formula also holds in dimension \(n = 2\), as shown by our second proof.
\section{Conformal vector fields}\label{sec3}

Let $(M^n, g)$ be a Riemannian manifold of dimension $n \geq 2$. Here, we present some facts about conformal vector fields. 

Recall that a smooth vector field $X$ on $(M^n, g)$ is said to be a conformal vector field if its flow consists of conformal transformations, or equivalently, if there exists a smooth function $\psi$ on $M$ (called the conformal factor of the conformal vector field $X$) such that the Lie derivative of the metric is a multiple of the metric, i.e.,
\[
\mathcal{L}_X g = 2\psi g,
\]
where the Lie derivative $\mathcal{L}_X g$ is given by
\[
\mathcal{L}_X g(Y,Z) = \langle \nabla_Y X, Z \rangle + \langle \nabla_Z X, Y \rangle, \quad X, Y, Z \in \mathfrak{X}(M).
\]
By Koszul's formula, we obtain that
\[
2g(\nabla_Y X, Z) = \mathcal{L}_X g(Y,Z) + dX^b(Y,Z),
\]
where $X^b$ is the $1$-form dual to $X$, defined as $X^b(Z) = g(X,Z)$ for all $Z \in \mathfrak{X}(M)$, and
\[
dX^b(Y,Z) = Y(X^b(Z)) - Z(X^b(Y)) - X^b([Y,Z]).
\]
Since $X$ is conformal, we infer that
\begin{eqnarray}\label{eq001}
2g(\nabla_Y X, Z) = 2\psi g(Y,Z) + dX^b(Y,Z).
\end{eqnarray}
Now, define a skew-symmetric tensor field $\varphi$ of type $(1,1)$ on $M$ by
\begin{equation}\label{eq002}
dX^b(Y,Z) = 2g(\varphi Y, Z), \quad Y, Z \in \mathfrak{X}(M).
\end{equation}
From equations \eqref{eq001} and \eqref{eq002}, we conclude that
\[
\nabla_Y X = \psi Y + \varphi Y.
\]

\begin{remark}
We recall that a conformal vector field $X$ is called \textit{closed conformal} if the 1-form $X^b$ is closed (equivalently, if $\varphi \equiv 0$). This means
\[
\nabla_Y X = \psi Y,
\]
for all $Y \in \mathfrak{X}(M)$. We point out that manifolds endowed with a nontrivial closed conformal vector field possess an interesting property: they are locally isometric to a warped product with a 1-dimensional factor (for more details, see, e.g., \cite[Section 3]{montiel}).
\end{remark}

The following results provide information about the curvature tensor and the gradients of the function $\psi$ and the tensor $\varphi$. In particular, we compute the Ricci tensor.
\begin{lemma}[Lemma 2.1 in \cite{deshmukh}]\label{lemma 2.1} Let $X$ be a conformal vector field on a Riemannian manifold $(M^n, g)$ with conformal factor $\psi$. Then,
$$(\nabla\varphi)(Y,Z)=R_m(Y,X)Z+Z(\psi)Y-g(Y,Z)\nabla\psi,$$
where $\varphi$ is the $(1,1)$-tensor defined in \eqref{eq002}, $(\nabla\varphi)(Y,Z)=\nabla_Y(\varphi Z)-\varphi(\nabla_YZ),$ $R_m$ is the curvature tensor field and $\nabla\psi$ is the gradient of the function $\psi.$ In particular, for any point $p \in M$ and an orthonormal basis $\{e_1, \dots , e_n\} \subset T_p M$, the Ricci tensor in $p$ satisfies
\begin{eqnarray}\label{eq003}
Ric_g |_p (X (p),\cdot )=-(n-1)\nabla \psi (p)-\displaystyle\sum_{i=1}^n\nabla \varphi |_p (e_i,e_i).
\end{eqnarray}
\end{lemma}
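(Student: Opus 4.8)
The plan is to differentiate the structural identity $\nabla_Y X = \psi Y + \varphi Y$ proved just above and read off the curvature. Using $R_m(Y,Z)X = \nabla_Y\nabla_Z X - \nabla_Z\nabla_Y X - \nabla_{[Y,Z]}X$ and substituting $\nabla_Y X = \psi Y + \varphi Y$ into each covariant derivative, the terms carrying derivatives of $\psi$ produce $Y(\psi)Z - Z(\psi)Y$, while the $\varphi$-terms, after invoking $\nabla_Y Z - \nabla_Z Y = [Y,Z]$ together with the definition $(\nabla\varphi)(Y,Z) = \nabla_Y(\varphi Z) - \varphi(\nabla_Y Z)$, collapse into an antisymmetrized expression. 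This yields the relation
\[
(\nabla\varphi)(Y,Z) - (\nabla\varphi)(Z,Y) = R_m(Y,Z)X - Y(\psi)Z + Z(\psi)Y ,
\]
which is the easy half of the statement.

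The main obstacle is that this computation only detects the antisymmetric part $(\nabla\varphi)(Y,Z) - (\nabla\varphi)(Z,Y)$, whereas the lemma claims a formula for the full tensor $(\nabla\varphi)(Y,Z)$. To close this gap I would exploit the skew-symmetry of $\varphi$: since $g(\varphi Y, Z) = -g(\varphi Z, Y)$ by \eqref{eq002}, differentiating covariantly and cancelling the terms in which $\varphi$ meets a derivative of its argument shows that $(W,Y,Z) \mapsto g((\nabla\varphi)(W,Y), Z)$ is skew-symmetric in its last two slots. Pairing the antisymmetric relation above against an arbitrary field $W$ and combining it with this last-slot skew-symmetry through the standard three-term cyclic manipulation then lets me solve algebraically for $(\nabla\varphi)(Y,Z)$, writing it as a half-sum of three curvature terms plus the gradient terms $Z(\psi)Y - g(Y,Z)\nabla\psi$.

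The decisive step is to verify that the three curvature terms assemble into exactly $R_m(Y,X)Z$. Writing $R(A,B,C,D) := g(R_m(A,B)C, D)$ and using the pair symmetry $R(A,B,C,D) = R(C,D,A,B)$ together with skew-symmetry in each pair, the half-sum becomes $\tfrac{1}{2}\big(R(X,W,Y,Z) + R(X,Z,W,Y) - R(X,Y,Z,W)\big)$. The first Bianchi identity in the form $R(X,Y,Z,W) + R(X,Z,W,Y) + R(X,W,Y,Z) = 0$ then reduces this to $-R(X,Y,Z,W) = g(R_m(Y,X)Z, W)$, so that, $W$ being arbitrary, the curvature contribution is precisely $R_m(Y,X)Z$. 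This Bianchi cancellation is the crux of the argument, and it is where the skew-symmetry of $\varphi$ pays off.

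Finally, for the Ricci identity \eqref{eq003} I would trace the established formula. Fixing a point $p$ and an orthonormal basis $\{e_1,\dots,e_n\}$, setting $Y = Z = e_i$ and summing over $i$ gives $\sum_i e_i(\psi)e_i = \nabla\psi$ and $\sum_i g(e_i,e_i)\nabla\psi = n\nabla\psi$, while the curvature trace satisfies $\sum_i R_m(e_i, X)e_i = -\operatorname{Ric}(X)$ (since $R(e_i,X,e_i,V) = -R(e_i,X,V,e_i)$), where $\operatorname{Ric}(X)$ is the metric dual of $\operatorname{Ric}_g(X,\cdot)$. Collecting the terms yields $\sum_i (\nabla\varphi)(e_i,e_i) = -\operatorname{Ric}(X) - (n-1)\nabla\psi$, which rearranges to \eqref{eq003}.
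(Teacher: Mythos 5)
Your proof is correct. The paper itself does not prove this lemma --- it is quoted directly from Deshmukh--Al-Solamy \cite{deshmukh} --- and your argument faithfully reconstructs the standard proof given there: differentiate $\nabla_Y X=\psi Y+\varphi Y$ to get the antisymmetrized curvature relation, use the skew-symmetry of $g((\nabla_W\varphi)Y,Z)$ in its last two slots together with the three-term cyclic trick and the first Bianchi identity to isolate $(\nabla\varphi)(Y,Z)$, and then trace to obtain \eqref{eq003}.
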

From this last result and using Bianchi's identity, we derive the following:
\begin{lemma}\label{lemma1}
Under the conditions of the last lemma, if $\psi$ is the conformal factor of $X$, then
\[
-(n-1)\Delta \psi = \dfrac{1}{2}X(R_g) + \psi R_g,
\]
where $R_g$ is the scalar curvature of $(M^n, g)$.
\end{lemma}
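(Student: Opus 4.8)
The plan is to take the divergence of both sides of the vector-field identity \eqref{eq003} and read off a scalar identity; the phrase ``using Bianchi's identity'' signals that the contracted second Bianchi identity $\diver \Ricc_g = \tfrac12\, dR_g$ is the main tool. Write the left-hand side as the $1$-form $\alpha = \Ricc_g(X,\cdot)$ (identified with its metric dual) and abbreviate the contraction on the right by $V = \sum_{i=1}^n (\nabla\varphi)(e_i,e_i)$. I would compute $\diver\alpha$ from both expressions and compare.

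First I expand $\diver\alpha$ directly. In a local orthonormal frame, the Leibniz rule gives
\[
\diver\alpha = \sum_i (\nabla_{e_i}\alpha)(e_i) = \sum_i \Big[(\nabla_{e_i}\Ricc_g)(X,e_i) + \Ricc_g(\nabla_{e_i}X,\, e_i)\Big].
\]
The first sum is $(\diver \Ricc_g)(X)$, which by the contracted second Bianchi identity equals $\tfrac12 X(R_g)$. For the second sum I substitute $\nabla_{e_i}X = \psi\, e_i + \varphi\, e_i$, obtained from $\nabla_Y X = \psi Y + \varphi Y$ established above: the part $\psi\sum_i \Ricc_g(e_i,e_i) = \psi R_g$, while $\sum_i \Ricc_g(\varphi\, e_i,\, e_i)$ vanishes because $\Ricc_g$ is symmetric and $\varphi$ is skew-symmetric. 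This yields $\diver\alpha = \tfrac12 X(R_g) + \psi R_g$.

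Next I compute the divergence of the right-hand side of \eqref{eq003}. The first term contributes $-(n-1)\diver(\nabla\psi) = -(n-1)\Delta\psi$. The only delicate point, which I expect to be the main obstacle, is to show $\diver V = 0$. Since $V$ is, up to sign, a divergence of the skew tensor $\varphi$, the quantity $\diver V$ is a double divergence $\nabla_a\nabla_b\varphi^{ab}$ of a skew-symmetric $2$-tensor. The symmetric part of $\nabla_a\nabla_b$ annihilates $\varphi^{ab}$ by skew-symmetry, and the antisymmetric part is the curvature commutator $\tfrac12[\nabla_a,\nabla_b]\varphi^{ab}$, which by the Ricci identity reduces to contractions of the Ricci tensor against $\varphi^{ab}$; these vanish again because $\Ricc_g$ is symmetric while $\varphi$ is skew. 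Equivalently, since $dX^b$ is exact and hence closed, $V$ is proportional to the codifferential $\delta(dX^b)$, so $\diver V$ is proportional to $\delta^2(dX^b) = 0$.

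Equating the two expressions for $\diver\alpha$ then gives $\tfrac12 X(R_g) + \psi R_g = -(n-1)\Delta\psi$, which is precisely the claimed identity. The whole argument is a short computation once the two structural facts are in place: the contracted Bianchi identity for the Ricci contraction, and the vanishing of the double divergence of the skew tensor $\varphi$.
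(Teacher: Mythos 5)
Your proposal is correct and follows essentially the same route as the paper: take the divergence of the identity \eqref{eq003}, use the contracted second Bianchi identity for the term $(\diver \Ricc_g)(X)$, pair $\Ricc_g$ with $\nabla X=\psi\,\mathrm{Id}+\varphi$ to extract $\psi R_g$ (the skew part dropping out), and kill $\diver\bigl(\sum_i(\nabla\varphi)(e_i,e_i)\bigr)$ as the double divergence of a skew-symmetric $2$-tensor via the Ricci commutator identity, exactly as the paper does in coordinates. Your alternative remark that this last vanishing is just $\delta^2(dX^b)=0$ is a clean reformulation (and in fact needs only $\delta^2=0$, not the closedness of $dX^b$).
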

\begin{proof}
Given $p \in M$, consider an orthonormal frame $E_1, \dots, E_n$ in a neighborhood of $p$. Taking the divergence of \eqref{eq003}, and using Bianchi's identity, we obtain
\begin{eqnarray*}
-(n-1)\Delta \psi &=&\text{div} Ric_g(X)+\dfrac{1}{2}\langle Ric_g, \mathcal{L}_Xg\rangle +\text{div}(\displaystyle\sum_{i=1}^n \nabla\varphi(E_i,E_i))\nonumber\\
&=&\dfrac{1}{2}X(R_g)+\psi R_g+\text{div}(\displaystyle\sum_{i=1}^n\nabla\varphi(E_i,E_i)).
\end{eqnarray*}   
To conclude this proof, we just note that
\begin{equation}\label{div_0}
\sum_{i=1}^n \text{div}(\nabla \varphi(E_i, E_i)) = 0.    
\end{equation}
In fact, since $\varphi$ is skew-symmetric, we have, using coordinates in a neighborhood of $p$,
\begin{equation*}\label{vanishingVarPhi} \varphi_{ki,ik}=\varphi_{ki,ki}+\varphi_{ti}R^t_{kik}+\varphi_{kt}R^t_{iik}=-\varphi_{ik,ki}+\varphi_{ti}R^t_{kik}+\varphi_{kt}R^t_{iik}.
\end{equation*}
This implies that
\[
2\varphi_{ki,ik} = \varphi_{ti} R^t_i - \varphi_{kt} R^t_k = 2\varphi_{ti} R^t_i = 0,
\]
where the last equality holds because $\varphi$ is skew-symmetric and the Ricci tensor $Ric$ is symmetric.  
\end{proof}

\section{A Pohozaev-type identity }\label{sec_poho}
\indent\par
In this section, we consider a Riemannian manifold $(M^n, g)$, where $n \geq 3$, endowed with a conformal vector field $X$ with a conformal factor $\psi$. The purpose here is to obtain a Pohozaev-type identity using the conformal method, starting with a geometric identity due to Schoen. The proof of this geometric identity is straightforward and is accomplished using integration by parts and the contracted second Bianchi identity.

We recall that Dirichlet solutions of certain semilinear elliptic equation in domains of Euclidean space satisfies an integral identity, which is known as  Pohozaev identity. Bourguignon and Ezin \cite{bourguignon1987scalar} proved that if we consider a closed Riemannian manifold with a conformal vector field, then the scalar curvature satisfies a similar integral identity. Afterwards, Schoen, see \cite{schoen}, generalized this identity.
\begin{theorem}[Pohozaev-Schoen Identity \cite{schoen}]
 Let $(M^{n},\bar{g})$ be a Riemannian manifold, $\Omega\subset M$ a bounded domain possibly with a boundary $\partial\Omega$, and $Y$ a vector field in $M$. Then,
\begin{eqnarray}\label{Pohid}
\displaystyle\int_{\Omega} Y(R_{\bar{g}})dV_{\bar{g}}=-\frac{n}{n-2}\displaystyle\int_{\Omega}\bar{g}\left(\stackrel{\bar{\circ}}{\Ricc_{\bar{g}}},\mathcal{L}_{Y}\bar{g}\right)dV_{\bar{g}}+\frac{2n}{n-2}\displaystyle\int_{\partial\Omega}\stackrel{\bar{\circ}}{\Ricc_{\bar{g}}}\left(Y,\bar{\nu}\right)d\sigma_{\bar{g}}, \label{pschoen} 
\end{eqnarray}
where $\stackrel{\bar{\circ}}{\Ricc_{\bar{g}}}$ is the traceless Ricci tensor on the metric $\bar{g}$.
\end{theorem}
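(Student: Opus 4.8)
The plan is to reduce the identity to two standard ingredients: the contracted second Bianchi identity and a single integration by parts applied to the symmetric tensor $\stackrel{\circ}{\Ricc} := \Ricc_{\bar g} - \frac{R_{\bar g}}{n}\,\bar g$ (I drop the bars for readability, and write $\langle\cdot,\cdot\rangle$ for the metric contraction of tensors). The first ingredient is the Bianchi relation
\[
\diver \stackrel{\circ}{\Ricc} = \tfrac{n-2}{2n}\, dR,
\]
which follows from $\diver \Ricc = \tfrac12 dR$ together with $\diver(R\,\bar g) = dR$. The second ingredient is the product rule
\[
\diver W = (\diver \stackrel{\circ}{\Ricc})(Y) + \tfrac12\,\langle \stackrel{\circ}{\Ricc}, \mathcal{L}_Y \bar g\rangle,
\]
where $W$ is the vector field metrically dual to the $1$-form $Z \mapsto \stackrel{\circ}{\Ricc}(Y,Z)$. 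Combining the two and integrating is what produces the identity.

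For the product rule I would fix $p \in M$ and a local orthonormal frame $\{e_i\}$ geodesic at $p$ (so $\nabla_{e_i}e_j|_p = 0$). Starting from $\langle W, e_i\rangle = \stackrel{\circ}{\Ricc}(Y,e_i)$ and differentiating, the terms involving $\nabla_{e_i}e_i$ drop at $p$, leaving
\[
\diver W = \sum_i (\nabla_{e_i}\stackrel{\circ}{\Ricc})(Y,e_i) + \sum_i \stackrel{\circ}{\Ricc}(\nabla_{e_i}Y, e_i).
\]
The first sum equals $(\diver \stackrel{\circ}{\Ricc})(Y)$ by symmetry of $\stackrel{\circ}{\Ricc}$. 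The second sum equals $\langle \stackrel{\circ}{\Ricc}, \nabla Y\rangle$; again because $\stackrel{\circ}{\Ricc}$ is symmetric, only the symmetric part of $\nabla Y$ contributes, and since $\mathrm{Sym}(\nabla Y) = \tfrac12\,\mathcal{L}_Y \bar g$ this yields the factor $\tfrac12\langle \stackrel{\circ}{\Ricc}, \mathcal{L}_Y \bar g\rangle$.

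Then I would integrate the product rule over $\Omega$ and apply the divergence theorem, using that $\langle W, \bar\nu\rangle = \stackrel{\circ}{\Ricc}(Y,\bar\nu)$ on $\partial\Omega$, to get
\[
\int_\Omega (\diver \stackrel{\circ}{\Ricc})(Y)\, dV_{\bar g} = \int_{\partial\Omega}\stackrel{\circ}{\Ricc}(Y,\bar\nu)\, d\sigma_{\bar g} - \tfrac12 \int_\Omega \langle \stackrel{\circ}{\Ricc}, \mathcal{L}_Y \bar g\rangle\, dV_{\bar g}.
\]
Substituting $(\diver \stackrel{\circ}{\Ricc})(Y) = \tfrac{n-2}{2n}\,Y(R_{\bar g})$ on the left-hand side and multiplying through by $\tfrac{2n}{n-2}$ gives exactly \eqref{pschoen}.

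The computation is essentially bookkeeping rather than conceptual, so I do not expect a genuine obstacle; the only points requiring care are the symmetrization factor $\tfrac12$ relating $\nabla Y$ to $\mathcal{L}_Y \bar g$, and the constant $\tfrac{n-2}{2n}$ in the Bianchi relation, whose denominator is precisely what produces the coefficients $\tfrac{n}{n-2}$ and $\tfrac{2n}{n-2}$ and confines the argument to $n \geq 3$, in agreement with the authors' remark on the dimensional restriction.
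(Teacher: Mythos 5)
Your proof is correct and is exactly the argument the paper has in mind: the authors do not write out a proof but cite Schoen and state that the identity follows from ``integration by parts and the contracted second Bianchi identity,'' which is precisely your combination of $\diver \stackrel{\circ}{\Ricc_{\bar g}} = \tfrac{n-2}{2n}\,dR_{\bar g}$ with the divergence of the $1$-form $\stackrel{\circ}{\Ricc_{\bar g}}(Y,\cdot)$. The bookkeeping (the symmetrization factor $\tfrac12$ and the constants $\tfrac{n}{n-2}$, $\tfrac{2n}{n-2}$) checks out.
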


\begin{remark}
There is an intriguing link between the Pohozaev-Schoen Identity mentioned above and the classical Pohozaev Identity \cite{pohozaev}. A proof of the latter can be found, for example, in \cite[page~171]{struwe}. This relationship was noted in \cite[Section 5]{rigoli} (see also \cite[Section 4.8]{gover}). Starting from \eqref{pschoen} for $(\Omega, \bar{g})$, where $\Omega \subset \mathbb{R}^{n}$ is a domain and $\bar{g} = u^{\frac{4}{n-2}}g_{0}$ with $g_{0}$ being the Euclidean metric, the idea is to consider the position vector in Euclidean space and rewrite \eqref{pschoen} in terms of the metric $g_{0}$. This served as a motivating point to explore a suitable conformal change in our setting.\end{remark}

Given the relevance of \eqref{Pohid} in connection with important problems in Geometric Analysis, it is natural to use this identity and the conformal geometry of the manifold to obtain some rigidity results in the context of Serrin's type problem. We consider \( u \) as a solution for the Serrin's-type problem.
\begin{eqnarray} \label{serrinproblem}
\left\{
\begin{array}{rcl}
     \Delta u +nku& = & -1 ,\ \text{on}\  \Omega \\
     u& > & 0,\ \text{in}\  \operatorname{int}(\Omega) \\
u & = &0, \ \text{on}\ \partial \Omega\\
|\nabla u| &=& c, \ \text{on } \partial \Omega.
\end{array}
\right.
\end{eqnarray}
In this way, we consider the conformal change $$\bar{g}=u^{\frac{4}{n-2}}g=e^{2\beta}g,$$
where $u$ satisfies \eqref{serrinproblem}. This is a smooth metric in the set $\Omega_{\varepsilon}=\Omega\setminus u^{-1}([0,\varepsilon)),$ where $\varepsilon>0$. 
The field $X$ remains conformal in $\bar{g},$ because
\begin{eqnarray*}   \mathcal{L}_{X}\bar{g}&=&\mathcal{L}_{X}(e^{2\beta}g)\\    &=&e^{2\beta}\mathcal{L}_{X}g+2e^{2\beta}X(\beta)g\\
&=&2(\psi+2X(\beta))\bar{g}.
\end{eqnarray*}
By considering $Y=X$, which is a conformal vector field in $(M^{n},\bar{g})$, \eqref{pschoen} becomes
\begin{equation}
\displaystyle\int_{\Omega_{\varepsilon}} X(R_{\bar{g}})dV_{\bar{g}}=\frac{2n}{n-2}\displaystyle\int_{\partial\Omega_{\varepsilon}}\stackrel{\bar{\circ}}{\Ricc_{\bar{g}}}\left(X,\bar{\nu}\right)d\sigma_{\bar{g}}. \label{pschoen2} 
\end{equation}
Now, we rewrite \eqref{pschoen2} in terms of the metric $g$. First,
\begin{eqnarray*}
\Ricc_{\bar{g}}&=&\Ricc_{g}-(n-2)(\nabla^2\beta-d\beta\otimes d\beta)-[\Delta_{g}\beta+(n-2)|\nabla\beta|^{2}]g\\
 &=&\Ricc_{g}-(n-2)\left[\frac{2}{n-2}\left(\frac{\nabla^2 u}{u}-\frac{1}{u^{2}}du\otimes du\right)-\frac{4}{u^{2}(n-2)^{2}}du\otimes du\right]\\
 & &-\left[\frac{2}{n-2}\left(\frac{\Delta u}{u}-\frac{|\nabla u|^{2}}{u^{2}}\right)+\frac{4(n-2)}{u^{2}(n-2)^{2}}|\nabla u|^{2}\right]g,
\end{eqnarray*}
this implies
\begin{equation*}
 \Ricc_{\bar{g}}=\Ricc_{g}-\frac{2}{u}\nabla^2 u+\frac{2n}{u^{2}(n-2)}du\otimes du-\frac{2}{u(n-2)}(\Delta u)g-\frac{2}{u^{2}(n-2)}|\nabla u|^{2}g.
\end{equation*}
Second, by tracing the above expression, we obtain
\begin{equation}
R_{\bar{g}}=u^{-\frac{4}{n-2}}\left[R_{g}-4\left(\frac{n-1}{n-2}\right)\frac{\Delta u}{u}\right].\label{scalarbarg}
\end{equation}
And, then,
\begin{eqnarray}
\stackrel{\bar{\circ}}{\Ricc_{\bar{g}}}=\stackrel{\circ}{\Ricc_{g}}-\frac{2}{u}\nabla^2 u+\frac{2n}{u^{2}(n-2)} du\otimes du+\frac{2}{un}(\Delta u)g-\frac{2}{u^{2}(n-2)}|\nabla u|^{2}g.  \label{ricci0barg}   
\end{eqnarray}
Therefore,
\begin{eqnarray*}
X(R_{\bar{g}})&=&X\left(u^{-\frac{4}{n-2}}R_{g}-4\left(\frac{n-1}{n-2}\right)u^{-\frac{n+2}{n-2}}\Delta u\right)\\
 & = &u^{\frac{-4}{n-2}}X(R_{g})+R_{g}\left(-\frac{4}{n-2}u^{-\frac{n+2}{n-2}}g(\nabla u,X)\right)+\frac{4(n-1)(n+2)}{(n-2)^{2}}u^{-\frac{2n}{n-2}}g(\nabla u,X)\Delta u\\
& & -\frac{4(n-1)}{n-2}u^{-\frac{n+2}{n-2}}g(X,\nabla\Delta u). 
\end{eqnarray*}
Since $dV_{\bar{g}}=u^\frac{2n}{n-2}dV_{g}$, it follows
\begin{eqnarray}
 \displaystyle\int_{\Omega_{\varepsilon}} X(R_{\bar{g}})dV_{\bar{g}}&=&\displaystyle\int_{\Omega_{\varepsilon}}u^{2}X(R_{g})dV_{g} -\frac{4}{n-2}\displaystyle\int_{\Omega_{\varepsilon}}uR_{g} g(\nabla u,X)dV_{g} \nonumber\\
 & &+\frac{4(n-1)(n+2)}{(n-2)^{2}}\displaystyle\int_{\Omega_{\varepsilon}}g(\nabla u, X)\Delta u dV_{g}-\frac{4(n-1)}{n-2}\displaystyle\int_{\Omega_{\varepsilon}}u g(X,\nabla\Delta u)dV_{g}. \label{lhspschoen}
\end{eqnarray}
Using that $\Delta u =-1-nku$ inside $\Omega$, we calculate the last two terms of \eqref{lhspschoen}. More precisely, 
\begin{eqnarray*}
&&\frac{4(n-1)(n+2)}{(n-2)^{2}}\displaystyle\int_{\Omega_{\varepsilon}}g(\nabla u, X)\Delta u dV_{g}-\frac{4(n-1)}{n-2}\displaystyle\int_{\Omega_{\varepsilon}}u g(X,\nabla\Delta u)dV_{g}\\
 &=&\frac{4(n-1)}{n-2}\left[\frac{n+2}{n-2}\displaystyle\int_{\Omega_{\varepsilon}}g(\nabla u, X)(-1-nku)dV_{g}+\displaystyle\int_{\Omega_{\varepsilon}}knu g(X,\nabla u)dV_{g}\right]\\
 &=&\frac{4(n-1)}{n-2}\left[-\left(\frac{n+2}{n-2}\right)\displaystyle\int_{\Omega_{\varepsilon}}g(\nabla u, X)dV_{g}-\frac{4nk}{n-2}\displaystyle\int_{\Omega_{\varepsilon}}ug(X,\nabla u)dV_{g}\right]\\
 & = & \frac{4n(n-1)(n+2)}{(n-2)^{2}}\displaystyle\int_{\Omega_{\varepsilon}} u\psi dV_{g}+\frac{8(n-1)n^{2}k}{(n-2)^{2}}\displaystyle\int_{\Omega_{\varepsilon}}u^{2}\psi dV_{g}+\xi_1(\varepsilon),
\end{eqnarray*}
where $\displaystyle\lim_{\varepsilon\to 0}\xi_{1}(\varepsilon)=0.$
Third equality above follows from the divergence theorem, along with the fact that $X$ is a conformal vector field with conformal factor $\psi$, which yields the following expressions
\begin{eqnarray*}
\displaystyle\int_{\Omega_{\varepsilon}}ug(X,\nabla u)dV_{g}&=&-\frac{1}{2}\displaystyle\int_{\Omega_{\varepsilon}}u^{2}\mbox{div} XdV_{g}+\frac{1}{2}\varepsilon^{2}\int_{\partial\Omega_{\varepsilon}}\langle X,\nu\rangle d\sigma_{g}\\
&=&-\frac{1}{2}\displaystyle\int_{\Omega_{\varepsilon}}nu^{2}\psi dV_{g}+\frac{1}{2}\varepsilon^{2}\int_{\partial\Omega_{\varepsilon}}\langle X,\nu\rangle d\sigma_{g}
\end{eqnarray*}
and
\begin{eqnarray*}
\displaystyle\int_{\Omega_{\varepsilon}} g(\nabla u,X)dV_{g}&=&-\displaystyle\int_{\Omega_{\varepsilon}}u\mbox{div} X dV_{g}+\varepsilon\int_{\partial\Omega_{\varepsilon}}\langle X,\nu\rangle d\sigma_{g}\\
&=&-n\displaystyle\int_{\Omega_{\varepsilon}}u\psi dV_{g}+\varepsilon\int_{\partial\Omega_{\varepsilon}}\langle X,\nu\rangle d\sigma_{g}. 
\end{eqnarray*}
Similarly, since
\begin{eqnarray*}
 \mbox{div}(u^{2}R_{g}X)&=&u^{2}R_{g}\mbox{div}X+u^{2}X(R)+2R_{g}ug(\nabla u, X)\\
 &=& nu^{2}R_{g}\psi+u^{2}X(R_{g})+2R_{g}ug(\nabla u, X),
\end{eqnarray*}
we have
\begin{eqnarray*}
\displaystyle\int_{\Omega_{\varepsilon}}R_{g}u g(\nabla u, X)dV_{g}=-\frac{1}{2}\displaystyle\int_{\Omega_{\varepsilon}}u^{2} X(R_{g}) dV_{g}-\frac{n}{2}\displaystyle\int_{\Omega_{\varepsilon}}u^{2}\psi R_{g}dV_{g}+\xi_{2}(\varepsilon),
\end{eqnarray*}
with $\displaystyle\lim_{\varepsilon\to 0}\xi_{2}(\varepsilon)=0$.
Replacing these results in \eqref{lhspschoen}, we obtain, after doing $\varepsilon \to 0$,
\begin{eqnarray}
\displaystyle\int_{\Omega}X(R_{\bar{g}})dV_{\bar{g}}
=\frac{2n}{n-2}\displaystyle\int_{\Omega}\left[u^{2}\left(\frac{X(R_{g})}{2}+\psi R_g\right)+\frac{2(n-1)(n+2)}{n-2}u\psi +\frac{4n(n-1)k}{n-2}u^{2}\psi\right]dV_{g}.\label{lhspschoen2}
\end{eqnarray}
On other hand, since $\bar{\nu}=u^{-\frac{2}{n-2}}\nu$ and $d\sigma_{\bar{g}}=u^{\frac{2(n-1)}{n-2}}d\sigma_{g}$, we use \eqref{ricci0barg} to write
\begin{eqnarray*}
\displaystyle\int_{\partial\Omega_{\varepsilon}}\stackrel{\bar{\circ}}{\Ricc_{\bar{g}}}\left(X,\bar{\nu}\right)d\sigma_{\bar{g}}&=&\displaystyle\int_{\partial\Omega_{\varepsilon}}u^{2}\stackrel{\bar{\circ}}{\Ricc_{\bar{g}}}\left(X,\nu\right)d\sigma_{g}\\
&=&\displaystyle\int_{\partial\Omega_\varepsilon}\left[u^{2}\stackrel{\circ}{\Ricc_{g}}(X,\nu)-2u\nabla^2 u(X,\nu)+\frac{2n}{n-2} (du\otimes du)(X,\nu)\right]d\sigma_{g}\\
& &+\displaystyle\int_{\partial\Omega_\varepsilon}\left[\frac{2u}{n}(\Delta u)g(X,\nu)-\frac{2}{(n-2)}|\nabla u|^{2}g(X,\nu)\right]d\sigma_{g}\\
&=&\frac{2}{n-2}\left[\displaystyle\int_{\partial\Omega_\varepsilon}(ng(\nabla u,X)g(\nabla u,\nu)-|\nabla u|^{2}g(X,\nu))d\sigma_{g}\right]+\zeta(\varepsilon)\\
&=& \frac{2(n-1)}{n-2}\displaystyle\int_{\partial\Omega_\varepsilon}|\nabla u|^{2}g(X,\nu)d\sigma_{g}+\zeta(\varepsilon)\\
&=&\frac{2(n-1)}{n-2}c^{2}\displaystyle\int_{\partial\Omega_\varepsilon}g(X,\nu)d\sigma_{g}+\zeta(\varepsilon),
\end{eqnarray*}
where $\displaystyle\lim_{\varepsilon\to 0}\zeta(\varepsilon)=0$. In the fourth equality we have used that $\nu=-\frac{\nabla u}{|\nabla u|}$, and in the fifth we have used that $|\nabla u|=c$ on $\partial\Omega$. By doing $\varepsilon\to 0$ in the above identity and using again the Divergence Theorem and that $\mbox{div}X=n\psi$, we conclude
\begin{equation}
\displaystyle\int_{\partial\Omega}\stackrel{\bar{\circ}}{\Ricc_{\bar{g}}}\left(X,\bar{\nu}\right)d\sigma_{\bar{g}}=\frac{2n(n-1)}{n-2}c^{2}\int_{\Omega}\psi dV_{g}.\label{ricci0barg2}  
\end{equation}
By replacing \eqref{lhspschoen2} and \eqref{ricci0barg2} in \eqref{pschoen2}, we conclude the following Pohozaev-type
identity in our underlying analysis
\begin{proposition}[Pohozaev-type identity]\label{poho_type}
Let $(M^n,g)$ be a Riemannian manifold of dimension $n\geq 3$ endowed with a conformal vector field $X$ with conformal factor $\psi$. If $\Omega$ is a bounded domain and $u$ is a solution for \eqref{serrinoka}, then
 \begin{eqnarray}
\frac{n-2}{2n(n-1)}\displaystyle\int_{\Omega}u^{2}\left(\frac{X(R_{g})}{2}+\psi R_g\right)dV_{g}+\frac{n+2}{n}\displaystyle\int_{\Omega}u\psi  dV_{g}+2k\displaystyle\int_{\Omega}u^{2}\psi dV_{g}=c^{2}\int_{\Omega}\psi dV_{g}. \label{conformal}
\end{eqnarray}   
\end{proposition}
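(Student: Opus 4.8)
The plan is to transport the Pohozaev--Schoen identity \eqref{pschoen} through the $u$-dependent conformal change $\bar g = u^{4/(n-2)}g = e^{2\beta}g$ and read off \eqref{conformal} by matching its bulk and boundary sides. Because the exponent $4/(n-2)$ forces $n\geq 3$, I would work on the smooth region $\Omega_{\varepsilon} = \Omega \setminus u^{-1}([0,\varepsilon))$, where $u>0$ makes $\bar g$ a genuine metric, and pass to the limit $\varepsilon \to 0$ at the very end. The first structural input is that $X$ stays conformal for $\bar g$: by the Leibniz rule $\mathcal{L}_X(e^{2\beta}g) = e^{2\beta}\mathcal{L}_X g + 2e^{2\beta}X(\beta)g = 2(\psi+2X(\beta))\bar g$, so $\mathcal{L}_X \bar g$ is pointwise proportional to $\bar g$. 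Feeding $Y = X$ into \eqref{pschoen} on $\Omega_{\varepsilon}$ then annihilates the interior term, since $\stackrel{\bar{\circ}}{\Ricc_{\bar{g}}}$ is trace-free and hence $\bar g$-orthogonal to any multiple of $\bar g$; this yields the reduced identity \eqref{pschoen2}.

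Next I would rewrite each side in terms of $g$. For the left-hand side I would use the conformal transformation law for scalar curvature to obtain \eqref{scalarbarg}, differentiate along $X$, and multiply by $dV_{\bar g} = u^{2n/(n-2)}dV_g$, arriving at \eqref{lhspschoen}. Substituting $\Delta u = -1-nku$ from \eqref{serrinoka} removes the $\Delta u$ and $\nabla\Delta u$ factors, and repeated application of the divergence theorem together with $\diver X = n\psi$ converts the remaining integrals into the $\int_{\Omega} u\psi$, $\int_{\Omega} u^2\psi$, and $\int_{\Omega} u^2(\tfrac12 X(R_g)+\psi R_g)$ contributions of \eqref{conformal}; the level-set boundary terms $\xi_1(\varepsilon),\xi_2(\varepsilon)$ carry positive powers of $u\sim\varepsilon$ and so vanish in the limit, giving \eqref{lhspschoen2}.

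For the right-hand side I would insert the explicit expression \eqref{ricci0barg} for $\stackrel{\bar{\circ}}{\Ricc_{\bar{g}}}$, then use $\bar\nu = u^{-2/(n-2)}\nu$ and $d\sigma_{\bar g} = u^{2(n-1)/(n-2)}d\sigma_g$ to produce the clean scaling $\stackrel{\bar{\circ}}{\Ricc_{\bar{g}}}(X,\bar\nu)\,d\sigma_{\bar g} = u^2\stackrel{\bar{\circ}}{\Ricc_{\bar{g}}}(X,\nu)\,d\sigma_g$, discarding the terms multiplied by positive powers of $u$ as $\varepsilon\to 0$. On $\partial\Omega$ the boundary conditions $u=0$, $\nu=-\nabla u/|\nabla u|$ and $|\nabla u|=c$ collapse the surviving $\nabla^2 u$, $du\otimes du$, and $|\nabla u|^2$ terms into a single multiple of $c^2 g(X,\nu)$, and a final divergence theorem with $\diver X = n\psi$ turns this into $\tfrac{2n(n-1)}{n-2}c^2\int_{\Omega}\psi\,dV_g$, namely \eqref{ricci0barg2}. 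Equating \eqref{lhspschoen2} with $\tfrac{2n}{n-2}$ times \eqref{ricci0barg2} and clearing constants then produces \eqref{conformal}.

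The hard part will be the careful bookkeeping of the $\varepsilon\to 0$ limit: one must confirm that every contribution from the inner level set $\{u=\varepsilon\}$ decays, which hinges on matching each boundary integrand against the correct positive power of $u$. The most delicate point is the traceless-Ricci boundary integrand, where the apparently singular $(n-2)^{-1}$ factors are exactly compensated by the $u^2$ scaling, so that precisely the $|\nabla u|^2$-terms (and not the curvature or Hessian terms) survive to generate the $c^2$ on the right of \eqref{conformal}. As a consistency remark, the combination $\tfrac12 X(R_g)+\psi R_g$ appearing in \eqref{conformal} equals $-(n-1)\Delta\psi$ by Lemma \ref{lemma1}, which will later serve as the bridge to the compatibility condition \eqref{main1}.
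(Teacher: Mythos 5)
Your proposal is correct and follows essentially the same route as the paper: the same conformal change $\bar g = u^{4/(n-2)}g$ on $\Omega_\varepsilon$, the same reduction of the Pohozaev--Schoen identity via the conformality of $X$ in $\bar g$, the same transformation laws for $R_{\bar g}$ and the traceless Ricci tensor, and the same integration by parts with $\operatorname{div}X = n\psi$ followed by the $\varepsilon\to 0$ limit. The points you flag as delicate (the decay of the inner level-set contributions and the cancellation in the traceless-Ricci boundary integrand) are exactly the ones the paper handles via the error terms $\xi_1,\xi_2,\zeta$.
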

\begin{remark}
The identity \eqref{conformal} is the same as that obtained by the second named author with Roncoroni and Santos (see \cite[Lemma 3]{freitas}) in the case where \(X\) is closed conformal. We highlight that the method used here is quite different. For a related approach to the previous work, see Appendix \ref{alternative}.
\end{remark}

\section{A rigidity result for Serrin's problem}\label{sec4}
In this section, we prove a rigidity result for Serrin's problem in Riemannian manifolds endowed with a conformal vector field. To do this, we use the Pohozaev-type identity associated with the solution of \eqref{serrinproblem} to apply the \(P\)-function approach.

Farina and Roncoroni \cite{farina-roncoroni} proved an Obata-type rigidity result with the hypothesis that the Ricci curvature is bounded from below. In fact, they showed that under this condition, if \( u \in C^2(\Omega) \) is a solution to 
\begin{eqnarray}\label{eqlapla}
\Delta u + nku = -1, \quad \text{in } \Omega,
\end{eqnarray}
where \( \Omega \subset M \) is a domain, then the \( P \)-function defined in \eqref{pfunction} is subharmonic. Moreover, they showed that the harmonicity of \( P \) implies that 
\[
Ric_{g}(\nabla u, \nabla u) = (n-1)k|\nabla u|^2 \quad \text{in } \Omega,
\]
and that \( u \) satisfies the equation
\[
\nabla^2 u = - \left( \frac{1}{n} + ku \right) g \quad \text{in } \Omega.
\]
Thus, an Obata-type result implies that the rigidity of a solution to \eqref{serrinproblem} follows from the harmonicity of the \( P \)-function \eqref{pfunction}. More precisely, they proved the following rigidity result:
\begin{lemma}[\cite{farina-roncoroni}]\label{farina-roncoroni}
Let $(M^{n},g)$ be an $n$-dimensional Riemannian manifold such that  
\begin{equation}\label{ricci_bound}
\mbox{Ric}_g\geq (n-1)kg, \text{for } k\in\mathbb{R}.    
\end{equation}
Let $\Omega\subset M$ be a domain and $u\in C^{2}(\Omega)$ a solution of \eqref{eqlapla} in $\Omega$. Then 
$$\Delta P(u)\geq 0.$$
Furthermore, $\Delta P(u)=0$ if and only if 
$\Omega$ is a metric ball and $u$ is a radial function, i.e. u depends only on the distance from the center of the ball.
\end{lemma}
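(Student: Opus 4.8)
The plan is to prove the inequality by a Bochner computation and to settle the equality case with an Obata--Tashiro-type rigidity argument. The key reduction is to rewrite $\Delta P(u)$ as a sum of manifestly nonnegative terms; concretely, for any solution of \eqref{eqlapla} I would establish
\[
\Delta P(u) = 2\bigl|\mathring{\nabla}^2 u\bigr|^2 + 2\bigl[\Ricc_g(\nabla u,\nabla u) - (n-1)k|\nabla u|^2\bigr].
\]

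To obtain this, I would start from Bochner's formula $\tfrac12\Delta|\nabla u|^2 = |\nabla^2 u|^2 + \langle\nabla u,\nabla(\Delta u)\rangle + \Ricc_g(\nabla u,\nabla u)$ and use $\Delta u = -1-nku$, which gives $\nabla(\Delta u) = -nk\nabla u$ and hence $\langle\nabla u,\nabla(\Delta u)\rangle = -nk|\nabla u|^2$. Writing $|\nabla^2 u|^2 = |\mathring{\nabla}^2 u|^2 + \tfrac1n(\Delta u)^2$ and combining this with $\tfrac{2}{n}\Delta u$ and $k\Delta(u^2) = k\bigl(2u\Delta u + 2|\nabla u|^2\bigr)$, the purely algebraic contributions in $1$, $u$, and $u^2$ cancel in pairs, leaving exactly the displayed identity. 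Since $\Ricc_g \geq (n-1)kg$ forces $\Ricc_g(\nabla u,\nabla u) \geq (n-1)k|\nabla u|^2$, both summands are nonnegative and $\Delta P(u) \geq 0$.

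For the equality case, $\Delta P(u)\equiv 0$ makes both nonnegative terms vanish identically, so that $\mathring{\nabla}^2 u = 0$, i.e.
\[
\nabla^2 u = \frac{\Delta u}{n}g = -\Bigl(\frac1n + ku\Bigr)g \quad\text{in }\Omega,
\]
together with $\Ricc_g(\nabla u,\nabla u) = (n-1)k|\nabla u|^2$. I would then run the standard concircular argument. As $u>0$ in $\operatorname{int}(\Omega)$ and $u=0$ on $\partial\Omega$, there is an interior maximum point $p_0$ with $\nabla u(p_0)=0$. Differentiating the Hessian equation gives $\nabla|\nabla u|^2 = 2\nabla^2 u(\nabla u,\cdot) = -2(\tfrac1n + ku)\nabla u$, so $|\nabla u|^2$ is a first integral depending only on $u$. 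Restricting the Hessian equation to a unit-speed geodesic $\gamma$ issuing from $p_0$ shows that $f(t)=u(\gamma(t))$ solves the autonomous problem $f'' = -(\tfrac1n + kf)$, $f(0)=u_{\max}$, $f'(0)=0$, whose solution is independent of the initial direction. Thus $u$ depends only on $r=d(p_0,\cdot)$, its level sets are geodesic spheres about $p_0$, and $\Omega=\{u>0\}$ is the metric ball of radius $r_0$ determined by $f(r_0)=0$; the reverse implication follows by direct verification in this radially symmetric model.

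I expect the Bochner bookkeeping to be routine. The main obstacle is the rigidity step: one must verify that $p_0$ is the \emph{unique} critical point and that the exponential map based at $p_0$ is a diffeomorphism onto $\Omega$ with no interior conjugate points, so that the concircular equation genuinely produces geodesic polar coordinates $dr^2 + \rho(r)^2\, g_{\mathbb{S}^{n-1}}$ in which $u$ is radially symmetric. This is precisely where the Obata--Tashiro machinery is needed, and where the Ricci equality $\Ricc_g(\nabla u,\nabla u)=(n-1)k|\nabla u|^2$ is used to pin down the warping function $\rho$.
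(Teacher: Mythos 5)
Your proposal is correct and follows essentially the same route the paper takes: the paper quotes this lemma from Farina--Roncoroni and sketches exactly your argument, namely the Bochner identity yielding $\Delta P(u) = 2|\mathring{\nabla}^2 u|^2 + 2[\Ricc_g(\nabla u,\nabla u) - (n-1)k|\nabla u|^2] \geq 0$, with harmonicity forcing $\nabla^2 u = -(\tfrac1n + ku)g$ and the Ricci equality, after which an Obata--Tashiro-type rigidity theorem identifies $\Omega$ as a metric ball with $u$ radial. Your cancellation of the $1$, $u$, $u^2$ terms checks out, and the remaining rigidity step you flag is precisely what is delegated to the cited Obata-type result in the source.
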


\begin{remark}
In the last result, the necessary condition is only \(\mbox{Ric}_g(\nabla u, \nabla u) \geq (n-1)k|\nabla u|^2\).
\end{remark}
In particular, if \eqref{ricci_bound} and \(|\nabla u| = c \text{ on } \partial \Omega\), we can apply the maximum principle to the subharmonic function \(P(u)\). This gives us:

\begin{itemize}
\item[(a)] either \(P(u) = c^2\) in \(\overline{\Omega}\); 
\item[(b)] or \(P(u) < c^2\) in \(\Omega\).
\end{itemize}

Following Weinberger's method, the idea is to look for a geometric condition such that (b) cannot occur; this would imply that \(P(u)\) is constant, and the rigidity follows from Lemma \ref{farina-roncoroni}. With these tools, we are able to demonstrate the main result of this section.
\begin{proof}[Proof of Theorem \ref{theorem-A}]
Since Proposition \ref{poho_type} gives us the same identity as in \cite[Lemma 3]{freitas}, the proof follows exactly as in the proof of \cite[Theorem 5]{freitas}, but we write it here for the sake of completeness. Let us suppose that the conformal factor \(\psi\) is positive (the argument with \(\psi < 0\) is the same) and that (b) occurs. Then, multiplying \eqref{pfunction} by \(\psi\) and integrating in \(\Omega\), we have
\begin{equation}\label{eq0}
c^2\displaystyle\int_{\Omega}\psi dV_{g}>\displaystyle\int_{\Omega}\psi|\nabla u|^2 dV_{g}+\dfrac{2}{n}\displaystyle\int_{\Omega}\psi u dV_{g}+k\displaystyle\int_{\Omega}\psi u^2 dV_{g}.
\end{equation}
Since
$\text{div}(u^2\nabla \psi)=u^2\Delta\psi+2u\langle \nabla u,\nabla \psi\rangle,$
and 
$\text{div}(\psi u\nabla u)=u\langle \nabla u,\nabla \psi\rangle+\psi|\nabla u|^2+\psi u\Delta u,$ we obtain that $\text{div}(u^2\nabla\psi)=u^2\Delta\psi+2\text{div}(\psi\nabla u)-2\psi|\nabla u|^2-2\psi\Delta u$. Now, using that $u=0$ on $\partial\Omega$, the Divergence Theorem and Lemma \ref{lemma1}, we conclude
\begin{eqnarray}\label{eq015}
\displaystyle\int_{\Omega}\psi|\nabla u|^2 dV_{g}&=&\dfrac{1}{2}\displaystyle\int_{\Omega}u^2\Delta \psi dV_{g}-\displaystyle\int_{\Omega}\psi u\Delta u dV_{g}\nonumber \\
&=&-\dfrac{1}{2(n-1)}\displaystyle\int_{\Omega}u^2\left[\dfrac{X(R_g)}{2}+\psi R_g\right]dV_{g}+\displaystyle\int_{\Omega}\psi u(1+nku)dV_{g}.\label{eq2}
\end{eqnarray}
Replacing \eqref{eq2} in \eqref{eq0} and using the identity obtained in Proposition \ref{poho_type}, we can rearrange the terms to arrive at
$$\displaystyle\int_{\Omega} u^2\left(-\dfrac{X(R_g)}{2}+\psi(-R_g+n(n-1)k\right)dV_{g}<0,$$
precisely a contradiction if we initially suppose the compatibility condition \eqref{main1}. Therefore, $P(u)$ is constant and, in particular, is harmonic. The rigidity follows from Lemma \ref{farina-roncoroni}.

\end{proof}
As an application, we approach the case where the manifold is Einstein.
\begin{proof}[Proof of Corollary \ref{corollary-1}]
Since \( R_g = n(n-1)k \) is constant, this implies that \( X(R_g) = 0 \). Consequently, the conditions \eqref{ricci_bound} and \eqref{main1} of Theorem \ref{theorem-A} are satisfied.
\end{proof}

\begin{remark}
It is a well-known fact that complete Einstein manifolds endowed with a conformal vector field are well classified. In \cite[Theorem 3.1]{kr}, the authors proved that if the conformal factor is non-constant, then \( M \) is a warped product. Alternatively, the last result could be applied in complete Einstein manifolds to guarantee that, when it admits a solution, this manifold is a warped product. In fact, the rigidity for Serrin's problem in this particular case also implies that \( \mathring{\nabla}^2 u = 0 \) in \( \Omega \), and by using Tashiro's classical result \cite{tashiro} (see also \cite[Section 1]{cheeger}), we conclude that \( \Omega \) is a warped product. Since \((M^n, g)\) is Einstein, by analyticity (see \cite[Theorem 5.26]{besse}), we conclude that the entire manifold is a warped product. 
\end{remark}

\begin{remark}
It follows from \cite[Corollary 2.2]{kr} that an Einstein manifold \((M^n, g)\) equipped with a homothetic conformal vector field (which means that the conformal factor is constant) must be Ricci flat. In this case, a solution to \eqref{serrinproblem} is constructed in \cite{fall}, provided that the manifold \(M\) is compact. It follows then, from the previous Remark, that the Riemannian manifold \((M^n, g)\) must be a warped product.
\end{remark}


\section{The Generalized Normalized Wall Shear Stress and a rigidity result for Serrin's problem} \label{sec5}
In this section, we consider a real analytic Riemannian manifold \((M^n, g)\) of dimension \(n \geq 2\) such that \(\text{Ric}_g \geq (n-1)kg\), where \(k \in \mathbb{R}\). Let \((\Omega, u)\) be a positive solution to the Dirichlet problem given by 
\begin{eqnarray}\label{serrinok3}
\Delta u + nku &=& -1, \quad u > 0 \quad \text{in } \Omega, \quad u = 0 \quad \text{on } \partial \Omega,
\end{eqnarray}
where \(\Omega \subset M^{n}\) is a bounded domain with a \(C^2\)-boundary. We denote the maximum value of \(u\) inside \(\Omega\) as \(u_{\text{max}} = \max_{\overline{\Omega}} u\), and its corresponding level set as \(\text{Max}(u) = u^{-1}(u_{\text{max}}) \cap \Omega\).

In \cite{ABM}, the problem \eqref{serrinok3} was studied in the case where \((M, g)\) is the 2-dimensional Euclidean space. The authors proved several interesting results, including some rigidity results for the solutions of \eqref{serrinok3} without the need to impose constant Neumann data. Moreover, it suffices to impose a bound on a normalization of \(\abs{\nabla u}\) along the boundary of \(\Omega\) (see \cite[Definition 1.3]{ABM}). Motivated by this work, we will show that the results in \cite[Section 2]{ABM} are fully generalizable to the case of Serrin's problem over real analytic Riemannian manifolds with a bound on the Ricci curvature. In the realm of static metrics, this quantity was approached by \cite{BM}. 

First, we start with the following:

\begin{definition}
Let \((\Omega, u)\) be a solution to the problem \eqref{serrinok3} and let \(\Gamma\) be a connected component of \(\partial \Omega\). We define the \textup{generalized normalized wall shear stress} of \(\Gamma\) as
\begin{equation}
    \overline{\tau}(\Gamma) = \frac{\max_{\Gamma} \abs{\nabla u}^2}{k u_{\text{max}}^2 + \frac{2}{n} u_{\text{max}}}.
\end{equation}
More generally, if \(N\) is a connected component of \(\Omega \setminus \text{Max}(u)\), we define the \textup{generalized normalized wall shear stress} of \(N\) as 
\begin{equation}
    \overline{\tau}(N) = \max \left\{ \overline{\tau}(\Gamma) \colon \Gamma \in \pi_{0} (\partial \Omega \cap \overline{N}) \right\},
\end{equation}
where we say that \(\Gamma \in \pi_{0} (\partial \Omega \cap \overline{N})\) if \(\Gamma\) is a connected component of \(\partial \Omega \cap \overline{N}\).
\end{definition}
\begin{remark}\label{remCrit}
We note that if \(N \subset \Omega \setminus \text{Max}(u)\) and the function \(u\) is nonconstant, then \(\overline{N} \cap \partial \Omega \neq \emptyset\). In fact, otherwise this would imply that \(u\) has an interior minimum inside \(N\), which contradicts the maximum principle. For more details, see Lemma 5.1 in \cite{BM}.
\end{remark}
Observe that this definition reduces to the one introduced in \cite{ABM} when \(n=2\) and \(k=0\). A variation of this definition was also used in \cite{EMa} to classify solutions \((\Omega, u)\) to the eigenvalue problem \(\Delta u + 2u=0\) on the 2-dimensional sphere \(\mathbb{S}^2\) in terms of its \textit{normalized wall shear stress}. For this, they used an appropriate \(P\)-function. Inspired by these results, we observe that the usefulness of the above definition relies on its relation with the \(P\)-function \eqref{pfunction}. In this way, we obtain 
\begin{lemma}\label{lemmaDisk}
Let \((\Omega, u)\) be a solution to the problem \eqref{serrinok3}, where \(\Omega\) is a bounded domain with a smooth boundary in a Riemannian manifold \((M^{n}, g)\) such that \(\text{Ric}_g \geq (n-1)k g\), \(k \in \mathbb{R}\). If 
\begin{equation}\label{limitationNabla}
\max_{\partial\Omega}\frac{|\nabla u|^{2}}{ku^{2}_{\max}+\frac{2}{n}u_{\max}} \leq 1,
\end{equation}
then \(\Omega\) is a metric ball in \(M\) and \(u\) is a radial function.
\end{lemma}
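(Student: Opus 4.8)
The plan is to show that the \(P\)-function \(P(u) = |\nabla u|^2 + \tfrac{2}{n}u + ku^2\) is constant on \(\overline{\Omega}\); the conclusion then follows at once from the rigidity part of Lemma \ref{farina-roncoroni}. The mechanism is a maximum-principle comparison between the boundary values of \(P(u)\), which are controlled by the hypothesis \eqref{limitationNabla}, and the value of \(P(u)\) at an interior maximum point of \(u\).

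First I would record the elementary structure of the solution. Since \(u\) is continuous on the compact set \(\overline{\Omega}\), positive in \(\Omega\), and zero on \(\partial\Omega\), it attains \(u_{\max}\) at some interior point \(p_0 \in \text{Max}(u)\), where necessarily \(\nabla u(p_0) = 0\). Hence
\[
P(u)(p_0) = \frac{2}{n}u_{\max} + ku_{\max}^2,
\]
whereas on \(\partial\Omega\) the condition \(u \equiv 0\) gives \(P(u) = |\nabla u|^2\) there. I would then check that the normalizing denominator in \eqref{limitationNabla} is strictly positive, which is the one point requiring care: evaluating the equation \(\Delta u + nku = -1\) at \(p_0\) and using \(\Delta u(p_0) \leq 0\) yields \(nku_{\max} = -1 - \Delta u(p_0) \geq -1\), so \(\tfrac{2}{n} + ku_{\max} \geq \tfrac{1}{n} > 0\) and therefore
\[
ku_{\max}^2 + \frac{2}{n}u_{\max} = u_{\max}\Bigl(\frac{2}{n} + ku_{\max}\Bigr) > 0.
\]
With the denominator positive, \eqref{limitationNabla} is equivalent to \(|\nabla u|^2 \leq ku_{\max}^2 + \tfrac{2}{n}u_{\max}\) on \(\partial\Omega\), that is, \(\max_{\partial\Omega} P(u) \leq P(u)(p_0)\).

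Finally I would invoke the maximum principle. By Lemma \ref{farina-roncoroni}, the hypothesis \(\text{Ric}_g \geq (n-1)kg\) makes \(P(u)\) subharmonic, so the weak maximum principle gives \(\max_{\overline{\Omega}} P(u) = \max_{\partial\Omega} P(u) \leq P(u)(p_0)\). Since \(p_0\) is an interior point, this forces \(P(u)(p_0) = \max_{\overline{\Omega}} P(u)\), so the subharmonic function \(P(u)\) attains its global maximum at an interior point of the connected domain \(\Omega\). The strong maximum principle then yields \(P(u) \equiv \text{const}\), hence \(\Delta P(u) = 0\), and the rigidity statement of Lemma \ref{farina-roncoroni} gives that \(\Omega\) is a metric ball and \(u\) is radial.

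The conceptual weight of the argument sits entirely inside Lemma \ref{farina-roncoroni} (the Bochner subharmonicity together with the Obata-type rigidity), which I am allowed to assume. Consequently the only genuinely delicate step here is establishing the sign of the normalizing denominator so that the inequality \eqref{limitationNabla} may be read off in the correct direction; the evaluation of the PDE at the maximum point is exactly what supplies this, and the remaining steps are standard applications of the weak and strong maximum principles.
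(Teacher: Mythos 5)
Your proof is correct and follows essentially the same route as the paper: subharmonicity of \(P(u)\) from Lemma \ref{farina-roncoroni}, the weak maximum principle to bound \(\max_{\overline{\Omega}}P(u)\) by the boundary values, comparison with \(P(u)\) at an interior maximum point of \(u\), and the strong maximum principle plus the rigidity part of Lemma \ref{farina-roncoroni}. Your explicit verification that \(ku_{\max}^2+\tfrac{2}{n}u_{\max}>0\) (via evaluating the equation at the maximum point) is a point the paper leaves implicit here, and it is a worthwhile addition since the hypothesis \eqref{limitationNabla} can only be read in the intended direction when that denominator is positive.
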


\begin{proof}
By Lemma \ref{farina-roncoroni}, we know that \(P(u)\) is a subharmonic function. Therefore, applying the maximum principle, we obtain
\[
\max_{\Omega} P(u) = \max_{\partial\Omega} P(u) \leq \max_{\partial\Omega} |\nabla u|^{2} \leq ku^{2}_{\textup{max}} + \frac{2}{n} u_{\max}.
\]

Let \(x\) be a maximum point of \(u\) in \(\Omega\). Then, we have \(\nabla u(x) = 0\), which implies

\[
P(u(x)) = \frac{2}{n} u_{\max} + k u^{2}_{\max}.
\]

Since \(P(u)\) attains its maximum at an interior point, it follows that \(P(u)\) is constant, and consequently, \(\Delta P(u) = 0\). Thus, the rigidity statement follows again from Lemma \ref{farina-roncoroni}.
\end{proof}
Note that if $\textup{Max} (u)$ consist only in isolated points, then the left hand side of \eqref{limitationNabla} is equal to $\overline{\tau} (\Omega \setminus \textup{Max}(u))$. The main result of this section consists to show that we can relax hypothesis \eqref{limitationNabla} in Lemma \ref{lemmaDisk} to get the rigidity of $(\Omega,u)$. Indeed, it is enough to have the existence of a connected subset $N \subset \Omega \setminus \textup{Max}(u)$ with $\overline{\tau} (N) \leq 1$. To prove this result, we introduce an \textit{energy function}. This kind of function was introduced in \cite{AM} in the study of solutions to the \textit{Vacuum Einstein Field Equations}, and then was adapted to the realm of overdetermined problems in \cite{ABM}. In this context, it was also used in \cite{EMa}.

Fix a connected component $N \subset \Omega \setminus$Max$(u)$ and let $\textup{Reg}^N (u)$ be the set of regular values of the function $u$ restricted to $N$. Since a solution to \eqref{serrinok3} is analytic, the critical values of $u$ are isolated (see \cite{Souc}) and then $\textup{Reg}^N (u) \subset (0, u_{\textup{max}})$ consists of a finite union of intervals. We consider the function $U: \textup{Reg}^N (u) \to \mathbb{R}$ defined by
\begin{equation}\label{energyFunction}
    U(t)= \frac{1}{\beta (t)^{\frac{n}{2}}} \int_{\{u=t\} \cap \overline{N}} \abs{\nabla u} d \sigma_g, 
\end{equation}
where
\begin{equation}\label{betaFunction}
    \beta (t):= \left( u_{\textup{max}}-t \right)+\frac{n k}{2}\left( u_{\textup{max}}^2-t^2 \right).
\end{equation}
This function is continuous in $\textup{Reg}^N (u)$, but it turns out that it is also non-increasing provided that $(\Omega, u)$ satisfies some conditions. In order to prove this claim, we will need the following Lemma in the case of negative curvature.
\begin{lemma}\label{lemmaMax}
    Let $(\Omega, u)$ be a solution to problem \eqref{serrinok3}, where $\Omega$ is a bounded domain with smooth boundary in a Riemannian manifold $(M^{n}, g)$. Let us suppose that $k<0$. Then it must be $$u_{\textup{max}}< -\frac{1}{nk}.$$
\end{lemma}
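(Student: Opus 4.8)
The statement asserts that when $k<0$, any solution of the Dirichlet problem \eqref{serrinok3} must satisfy $u_{\textup{max}}<-\frac{1}{nk}$. My plan is to locate an interior maximum point and extract the inequality from the second-order necessary conditions there. Let $x\in\Omega$ be a point where $u$ attains its maximum $u_{\textup{max}}$, which exists since $\overline{\Omega}$ is compact and $u>0$ in $\Omega$ vanishes on $\partial\Omega$, so the maximum is achieved at an interior point. At such a point one has $\nabla u(x)=0$ and the Hessian $\nabla^2 u(x)$ is negative semidefinite, whence $\Delta u(x)\leq 0$.

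The key step is to evaluate the PDE at $x$. Since $\Delta u + nku = -1$ on $\Omega$, at the maximum point we get
\begin{equation*}
\Delta u(x) = -1 - nk\,u_{\textup{max}}.
\end{equation*}
Combining this with $\Delta u(x)\leq 0$ yields $-1-nk\,u_{\textup{max}}\leq 0$, i.e. $nk\,u_{\textup{max}}\geq -1$. Because $k<0$ (so $nk<0$), dividing by $nk$ reverses the inequality and gives $u_{\textup{max}}\leq -\frac{1}{nk}$.

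The remaining issue is to upgrade the weak inequality $\leq$ to the strict inequality $<$ claimed in the statement. The natural way is to rule out equality: if $u_{\textup{max}}=-\frac{1}{nk}$, then $\Delta u(x)=0$ at the maximum. I would argue that $u$ cannot be constant (it vanishes on $\partial\Omega$ but is positive inside), so one can invoke the strong maximum principle: at an interior maximum of a nonconstant solution, the operator forces a strict sign. More precisely, one rewrites the equation as $\Delta u = -1-nku$ and notes that near the maximum $-1-nku$ is close to $\Delta u(x)$; a Hopf-type or strong maximum principle argument applied to the elliptic equation $\Delta u + nku+1=0$ shows the maximum cannot be attained with the degenerate value, forcing strictness. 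I expect this upgrade from $\leq$ to $<$ to be the only delicate point; the first-order part is a routine maximum-principle computation.
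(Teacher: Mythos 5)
Your argument is correct in substance but reaches the conclusion by a partly different route from the paper. For the non-strict inequality you use only the second-derivative test at an interior maximum ($\Delta u(x)\le 0$ plus the PDE gives $u_{\textup{max}}\le -\frac{1}{nk}$), which is more elementary than what the paper does: the paper argues by contradiction from the start, introducing the constant supersolution $v=-\frac{1}{nk}$, observing that $w=u-v$ solves the homogeneous equation $\Delta w+nkw=0$ with $nk<0$, and invoking the maximum principle for this operator. Where the two proofs necessarily converge is the strictness step, and here your sketch is the one place that needs tightening: applying a strong maximum principle ``to the equation $\Delta u+nku+1=0$'' is not quite right as stated, because of the inhomogeneous term $+1$; the clean way to rule out the equality case $u_{\textup{max}}=-\frac{1}{nk}$ is precisely the paper's substitution. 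Set $w=u+\frac{1}{nk}$; then $\Delta w+nkw=\Delta u+nku+1=0$, $w\le 0$ in $\Omega$, and $w$ attains the value $0$ at the interior maximum point, so the strong maximum principle for $\Delta+c$ with $c=nk\le 0$ (applicable since the interior maximum is nonnegative) forces $w\equiv 0$, contradicting $w=\frac{1}{nk}<0$ on $\partial\Omega$. With that substitution made explicit, your proof is complete; the trade-off is that your version isolates the elementary part (the weak inequality costs nothing), while the paper's single comparison argument handles both the bound and its strictness in one stroke.
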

\begin{proof}
   Consider $k=-\kappa$ for some $\kappa >0$. Suppose, by contradiction, that $u_{\textup{max}} \geq 1/n\kappa$. Since $u=0$ along $\partial \Omega$ and $u>0$ inside $\Omega$, it follows that $u$ reach its maximum value inside $\Omega$. Define the constant function 
   \[
   v = \frac{1}{\kappa n} \quad \textup{inside } \Omega.
   \]
   Then, $v$ solves the equation $\Delta v - n \kappa v=-1$. Thus, the function $w=u-v$ defined in $\Omega$ is a solution to the equation
   \[
   \Delta w -n\kappa w=0 \quad \textup{in } \Omega.
   \]
   But, this is a contradiction with the weak maximum principle, because $w$ has a non-negative maximum inside $\Omega$. Thus, the Lemma is proved.
\end{proof}
Now, in the next result we prove the monotonicity of the energy function $U$. The limits are always taken in regular values.
\begin{proposition}\label{propEnergy}
Let $(\Omega, u)$ be a solution to problem \eqref{serrinok3}, where $\Omega$ is a bounded domain with $\mathcal{C}^2$-boundary in a real analytic Riemannian manifold $(M^{n}, g)$. Let $N \subset \Omega \setminus \textup{Max} (u)$ be a connected component such that $\mbox{Ric}_g\geq (n-1)kg$ inside $N$ and $\overline{\tau}(N) \leq 1$. Then, the function $U$ is non-increasing.
\end{proposition}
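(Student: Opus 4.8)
The plan is to differentiate $U$ directly at a regular value $t \in \textup{Reg}^N(u)$ and to reduce the sign of $U'(t)$ to the subharmonicity of the $P$-function \eqref{pfunction}. Write $\Sigma_t := \{u = t\} \cap \overline{N}$, which for $t \in (0, u_{\textup{max}})$ is a compact hypersurface without boundary contained in $\textup{int}(\Omega)$, since it meets neither $\partial\Omega = \{u=0\}$ nor $\textup{Max}(u)=\{u=u_{\textup{max}}\}$. Put $E(t) = \int_{\Sigma_t}|\nabla u|\,d\sigma_g$, so that $U(t) = \beta(t)^{-n/2}E(t)$ by \eqref{energyFunction} and \eqref{betaFunction}. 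First I would compute $E'(t)$ by applying the divergence theorem to $\nabla u$ on the slab $\{s < u < t\}\cap N$ (whose boundary is $\Sigma_t \cup \Sigma_s$, with outward conormal $\pm\nabla u/|\nabla u|$) together with the coarea formula, obtaining
\[
E(t) - E(s) = \int_{\{s<u<t\}\cap N}\Delta u\,dV_g = \int_s^t\!\!\int_{\Sigma_\tau}\frac{\Delta u}{|\nabla u|}\,d\sigma_g\,d\tau,
\]
hence $E'(t) = \int_{\Sigma_t}\frac{\Delta u}{|\nabla u|}\,d\sigma_g$. Since $\Delta u = -(1+nkt) = \beta'(t)$ on $\Sigma_t$ by \eqref{serrinok3} and \eqref{betaFunction}, this collapses to the clean identity $E'(t) = \beta'(t)\int_{\Sigma_t}|\nabla u|^{-1}\,d\sigma_g$.

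Next I would differentiate $U = \beta^{-n/2}E$ and factor:
\[
U'(t) = \beta^{-n/2}\beta'\left(\int_{\Sigma_t}\frac{d\sigma_g}{|\nabla u|} - \frac{n}{2\beta}\int_{\Sigma_t}|\nabla u|\,d\sigma_g\right).
\]
The two sign facts I need are $\beta(t) > 0$ and $\beta'(t) < 0$ on $(0,u_{\textup{max}})$; both follow by factoring $\beta(t) = (u_{\textup{max}}-t)\bigl[1 + \tfrac{nk}{2}(u_{\textup{max}}+t)\bigr]$ and, in the case $k<0$, invoking the bound $u_{\textup{max}} < -1/(nk)$ from Lemma \ref{lemmaMax}. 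Thus $\beta^{-n/2}\beta' < 0$, and $U'(t)\le 0$ is equivalent to the bracketed integral being nonnegative.

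The key algebraic step is to recognize this bracket through $P$. Setting $P_{\textup{max}} := \tfrac{2}{n}u_{\textup{max}} + ku_{\textup{max}}^2$ and $\Phi(t) := \tfrac{2}{n}t + kt^2$, a direct check gives $\tfrac{2}{n}\beta(t) = P_{\textup{max}} - \Phi(t)$, while on $\Sigma_t$ one has $|\nabla u|^2 = P(u) - \Phi(t)$. Substituting,
\[
\int_{\Sigma_t}\frac{d\sigma_g}{|\nabla u|} - \frac{n}{2\beta}\int_{\Sigma_t}|\nabla u|\,d\sigma_g = \frac{n}{2\beta}\int_{\Sigma_t}\frac{\tfrac{2}{n}\beta - |\nabla u|^2}{|\nabla u|}\,d\sigma_g = \frac{n}{2\beta}\int_{\Sigma_t}\frac{P_{\textup{max}} - P(u)}{|\nabla u|}\,d\sigma_g.
\]
Everything therefore reduces to showing $P(u) \le P_{\textup{max}}$ throughout $\overline{N}$. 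For this I would use that $P$ is subharmonic in $N$ by Lemma \ref{farina-roncoroni} (this is where $\textup{Ric}_g \ge (n-1)kg$ inside $N$ is used) and apply the maximum principle on the bounded open set $N$, whose topological boundary satisfies $\partial N \subset \textup{Max}(u)\cup\partial\Omega$. On $\partial N \cap \textup{Max}(u)$ we have $\nabla u = 0$ and $u = u_{\textup{max}}$, so $P = P_{\textup{max}}$; on each component $\Gamma$ of $\partial\Omega\cap\overline N$ we have $u=0$, so $P = |\nabla u|^2 \le \max_\Gamma|\nabla u|^2 \le \overline{\tau}(N)\,P_{\textup{max}} \le P_{\textup{max}}$ by \eqref{GNWSSboundary} and the hypothesis $\overline{\tau}(N)\le 1$. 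The maximum principle then yields $P \le P_{\textup{max}}$ on $\overline N$, so the last integral is nonnegative and $U'(t)\le 0$.

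I expect the main obstacle to be the geometric bookkeeping rather than the analysis: verifying that $\Sigma_t$ is a closed hypersurface receiving no contribution from $\partial\Omega$ or $\textup{Max}(u)$ across an entire interval of regular values, so that no boundary terms enter $E'(t)$, and confirming that $\partial N \subset \textup{Max}(u)\cup\partial\Omega$ with $P$ controlled on each piece. The finiteness of critical values, guaranteed by real analyticity (so that $\textup{Reg}^N(u)$ is a finite union of intervals on which the slab argument is valid), is precisely what makes the monotonicity meaningful on all of $\textup{Reg}^N(u)$.
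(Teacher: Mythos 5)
Your proposal is correct and follows essentially the same route as the paper: both arguments reduce everything to the bound $P(u)\le \tfrac{2}{n}u_{\max}+ku_{\max}^2$ on $N$ (obtained from the subharmonicity of $P$ in Lemma \ref{farina-roncoroni}, the maximum principle, and the hypothesis $\overline{\tau}(N)\le 1$), together with the positivity of $1+nku$ from Lemma \ref{lemmaMax} when $k<0$. The only cosmetic difference is that you differentiate $U$ via the coarea formula, whereas the paper integrates $\operatorname{div}\bigl(\nabla u/\beta(u)^{n/2}\bigr)$ over the slab $\{t_1<u<t_2\}$; these are the same computation, and your slab identity for $E(t)-E(s)$ already supplies the continuity needed to pass the monotonicity across the finitely many critical values.
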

\begin{proof}
    It follows as in the proof of Proposition 2.3 of \cite{ABM}. Let $\epsilon>0$ be such that $u_{\text{max}}-\epsilon \in \textup{Reg}^N (u)$, so the inner domain
    \begin{equation}\label{innerDomain}
        N_{\epsilon} = \left\{ p \in N ~\colon~ u(p)< u_{\text{max}}-\epsilon\right\}
    \end{equation}
    has regular boundary. By Lemma \ref{farina-roncoroni} we have that the $P$-function $P (u)$ defined in \eqref{pfunction} is subharmonic, so applying the maximum principle, we have 
    \[
\max_{N_\epsilon} P(u) \leq \max_{\partial N_\epsilon} P(u).
    \]
    On the other hand, we notice that
    \[
\lim_{\epsilon \to 0^+} \max_{\partial N_\epsilon} P(u) \leq  \frac{2}{n} u_{\text{max}}+k u_{\text{max}}^2,   
    \]
  because $P(u) = \abs{\nabla u}^2 \leq \frac{2}{n} u_{\text{max}}+k u_{\text{max}}^2$ in $\overline{N} \cap \partial \Omega$ (since $\overline{\tau} (N) \leq 1$ by hypothesis) and
    \[
    \lim_{\epsilon \to 0^+} \max_{\partial N_\epsilon \cap \textup{Max}(u)} P(u) = \frac{2}{n} u_{\text{max}}+k u_{\text{max}}^2.
    \]
     It follows that
    \begin{equation}\label{boundP}
        P(u) \leq \frac{2}{n} u_{\text{max}}+k u_{\text{max}}^2 \quad \text{inside } N.
    \end{equation}
    Using this fact, \eqref{serrinok3} and \eqref{betaFunction}, we get 
\begin{eqnarray*}
 \text{div}  \left(\frac{\nabla u}{\beta(u)^{\frac{n}{2}}}\right)&=&\frac{1+n k u}{\beta(u)^{\frac{n}{2}+1}} \left( \frac{n}{2} \abs{\nabla u}^2 -\beta(u) \right)\\
&=&\frac{n}{2}\frac{(1+nku)}{\beta(u)^{\frac{n}{2}+1}} \left( P(u)-\frac{2}{n} u_{\textup{max}}-k u_{\textup{max}}^2 \right) \leq 0,
\end{eqnarray*}
%
%
inside $N$. Observe that, in the case of negative curvature, we have used Lemma \ref{lemmaMax} to obtain  $1+nku >0$ in $N$. Now choose $t_1, t_2 \in \textup{Reg}^N (u)$ with $t_1 < t_2$, such that 
\[
V=\left\{ p \in N ~\colon~ t_1< u(p)<t_2 \right\} \subset N
\]
is an open set with analytic boundary. Let $\nu$ be the outer unit normal to $\partial V$. Then, if we define $\beta (t)$ as in \eqref{betaFunction}, we get the desired result applying the Divergence Theorem. In fact,
\begin{equation*}
    \begin{split}
        U(t_2)-U(t_1) &= \frac{1}{\beta (t_2)^{\frac{n}{2}}} \int_{\{u=t_2\} \cap \overline{N}} \abs{\nabla u} d \sigma_g- \frac{1}{\beta (t_1)^{\frac{n}{2}}} \int_{\{u=t_1\} \cap \overline{N}} \abs{\nabla u} d \sigma_g \\
        &=  \int _{\{u=t_2\}} \left\langle \frac{\nabla u}{\beta (t_2)^{\frac{n}{2}}},\nu\right\rangle d\sigma_g + \int _{\{u=t_1\}} \left\langle \frac{\nabla u }{\beta (t_1)^{\frac{n}{2}}},\nu \right\rangle d\sigma_g \\
        &= \int_{V} \text{div} \left( \frac{\nabla u}{\left( \left( u_{\text{max}}-u \right)+\frac{n k}{2}\left( u_{\text{max}}^2-u^2 \right) \right)^{\frac{n}{2}}} \right) dV_g \leq 0,
    \end{split}
\end{equation*}
where we have used that $\nu =- \nabla u / \abs{\nabla u}$ along $\{u=t_1\}$ and $\nu =\nabla u / \abs{\nabla u}$ along $\{u=t_2\}$, since $t_1$ and $t_2$ are regular values of $u$ and $\nabla u$ points in the direction of maximum increase.
\end{proof}
\begin{proof} [Proof of Theorem \ref{theorem-B}]
    We will prove that it must be \(\overline{N} = \overline{\Omega}\), so the statement will follow from Lemma \ref{lemmaDisk}. To do so, we will show that if \(N \neq \Omega \setminus \textup{Max}(u)\), then 
\[
\limsup_{t \to u_{\textup{max}}^{-}} U(t) = +\infty
\]
holds, contradicting Proposition \ref{propEnergy}. We will follow \cite[Proposition 5.4]{BM}.

First, observe that since it must be \(\overline N \neq \overline \Omega\), the Lojasiewicz Structure Theorem (see \cite[Theorem 6.3.3]{Kr}) implies that \(\textup{Max}(u) \cap \partial N\) contains at least one analytic hypersurface, possibly with boundary. In fact, since 
\[
\Delta u(p) = -(1+nku_{\textup{max}}) < 0, \quad \forall p \in \textup{Max}(u),
\]
it follows from \cite[Corollary 3.4]{Chr} that \(\textup{Max}(u) \cap \partial N\) contains a complete analytic hypersurface without boundary. Therefore, if \(L^{n-1}\) denotes the \((n-1)\)-dimensional Lebesgue measure associated with the metric \(g\), then 
\[
L^{n-1}(\textup{Max}(u) \cap \partial N) > 0.
\]
Next, since \(\textup{Max}(u)\) is compact, we can apply the Lojasiewicz inequality (see, for example, \cite[Theorem 2.1]{Chr}) to find a tubular neighborhood \(V\) of \(\textup{Max} \cap \partial N\) without critical points of \(u\) and two constants \(s > 0\), \(\frac{1}{2} \leq \theta < 1\) such that 
\[
|\nabla u|(x) \geq s (u_{\textup{max}} - u)^{\theta}, \quad \forall x \in V.
\]
In particular, since \(\{u=t\} \cap \overline{N} \subset V\), if \(t \in \textup{Reg}^N(u)\) is sufficiently close to \(u_{\textup{max}}\), we have that
\begin{eqnarray*}
\frac{1}{\left( u_{\text{max}} + \frac{n k}{2} u_{\text{max}}^2 \right)^{\frac{n}{2}}} \int_{\partial \Omega \cap \overline{N}} |\nabla u| &=& U(0) \geq U(t) \\
&\geq& \frac{s}{(u_{\text{max}} - t)^{\frac{n}{2} - \theta} \left( 1 + \frac{nk}{2} (u_{\text{max}} + t) \right)^{\frac{n}{2}}} L^{n-1}(\{u=t\} \cap \overline{N}),
\end{eqnarray*}
where in the first inequality we have used Proposition \ref{propEnergy} and the second follows from the Lojasiewicz inequality. Now, observe that since \(u\) is real analytic, we can reason as in the proof of \cite[Proposition 5.4]{BM} to conclude that 
\[
\limsup_{t \to u_{\text{max}}} L^{n-1}(\{u=t\} \cap \overline{N}) > 0.
\]
Thus, since \(\theta < 1\), we arrive at a contradiction with Proposition \ref{propEnergy}, because the right-hand side of the previous chain of inequalities is unbounded. Therefore, the result follows.
\end{proof}
Now Theorem \ref{theorem-B} applies directly to the case of Einstein manifolds.
\begin{proof}[Proof of Corollary \ref{corollary-2}]
    If $(M,g)$ is Einstein, then it must be a real analytic manifold because of \cite[Theorem 5.26]{besse}. Thus, the result follows directly from Theorem \ref{theorem-B}. 
\end{proof}








\appendix
\section{An alternative approach}\label{alternative}
In this section, following a similar approach to \cite{freitas} (see also \cite{farina-roncoroni}), we alternatively obtain the Pohozaev-type formula of Proposition \ref{poho_type}. We follow the notation of earlier sections, where \(X\) is a conformal field with conformal factor \(\psi\) in a Riemannian manifold \((M^n, g)\) and \((\Omega, u)\) is a solution for \eqref{serrinproblem}. Throughout this section, we may assume \(n \geq 2\).
First of all, we calculate the expression \(\Delta \langle X, \nabla u \rangle\).
\begin{lemma}\label{eq004}Let $u$ be a solution of \eqref{serrinproblem} in a Riemannian manifold $(M^n,g)$ endowed with a conformal vector field $X$. Then
$$\Delta\langle X, \nabla u\rangle  = (3-n)\langle \nabla \psi,\nabla u\rangle  -3\psi(1+nku)-{\text{div}}(\nabla_{\nabla u}X)-\displaystyle\sum\langle\nabla \varphi(E_i,E_i),\nabla u\rangle -nk\langle \nabla u,X\rangle,$$
where $\left\{E_1, \dots, E_n\right\}$ is an orthonormal frame in a neighbourhood of a point, $\psi$ is the conformal factor of $X$ and $\varphi$ is the skew-symmetric tensor defined in \eqref{eq002}.
\end{lemma}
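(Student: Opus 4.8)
The plan is to compute $\Delta\langle X,\nabla u\rangle$ by a direct calculation in a local orthonormal frame $\{E_1,\dots,E_n\}$ that is geodesic at the point under consideration, and to split the Laplacian of the inner product via metric compatibility (the Leibniz rule for the connection, or rough, Laplacian):
$$\Delta\langle X,\nabla u\rangle=\langle\Delta X,\nabla u\rangle+2\sum_i\langle\nabla_{E_i}X,\nabla_{E_i}\nabla u\rangle+\langle X,\Delta\nabla u\rangle,$$
where $\Delta X=\sum_i\nabla_{E_i}\nabla_{E_i}X$ at the point. The strategy is then to evaluate the three pieces separately, feeding in the conformal splitting $\nabla_Y X=\psi Y+\varphi Y$, the equation $\Delta u=-1-nku$, the Bochner formula, and the Ricci identity \eqref{eq003}.

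For the cross term, substituting $\nabla_{E_i}X=\psi E_i+\varphi E_i$ gives $\sum_i\langle\nabla_{E_i}X,\nabla_{E_i}\nabla u\rangle=\psi\,\Delta u+\sum_i\nabla^2 u(E_i,\varphi E_i)$; the second sum is the trace of the composition of the symmetric Hessian with the skew-symmetric tensor $\varphi$, hence vanishes, leaving $-\psi(1+nku)$ after using the equation. For the last piece, the Bochner identity $\Delta\nabla u=\nabla(\Delta u)+\mathrm{Ric}_g(\nabla u)$ together with $\nabla(\Delta u)=-nk\nabla u$ yields $\langle X,\Delta\nabla u\rangle=-nk\langle X,\nabla u\rangle+\mathrm{Ric}_g(X,\nabla u)$. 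For the first piece, differentiating $\nabla_{E_i}X=\psi E_i+\varphi E_i$ once more and using the definition $(\nabla\varphi)(Y,Z)=\nabla_Y(\varphi Z)-\varphi(\nabla_Y Z)$ gives $\Delta X=\nabla\psi+\sum_i\nabla\varphi(E_i,E_i)$, so that $\langle\Delta X,\nabla u\rangle=\langle\nabla\psi,\nabla u\rangle+\sum_i\langle\nabla\varphi(E_i,E_i),\nabla u\rangle$.

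To bring the result into the asserted form, I would package the curvature contraction through the divergence term appearing in the statement. Writing $\nabla_{\nabla u}X=\psi\nabla u+\varphi\nabla u$ and computing its divergence by the same tools (the $\varphi$-part again reduces, via the symmetric–skew trace identity, to the contraction $\sum_i\langle(\nabla\varphi)(E_i,\nabla u),E_i\rangle$, which \eqref{eq003} rewrites as $-\sum_i\langle\nabla\varphi(E_i,E_i),\nabla u\rangle$), one obtains
$$\operatorname{div}(\nabla_{\nabla u}X)=\langle\nabla\psi,\nabla u\rangle-\psi(1+nku)-\sum_i\langle\nabla\varphi(E_i,E_i),\nabla u\rangle.$$
Substituting this identity, together with \eqref{eq003} in the form $\mathrm{Ric}_g(X,\nabla u)=-(n-1)\langle\nabla\psi,\nabla u\rangle-\sum_i\langle\nabla\varphi(E_i,E_i),\nabla u\rangle$, into the sum of the three pieces and collecting the coefficients of $\langle\nabla\psi,\nabla u\rangle$, $\psi(1+nku)$, and $nk\langle X,\nabla u\rangle$ reproduces exactly the claimed expression.

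The main obstacle is the bookkeeping of the curvature that arises when commuting covariant derivatives — both in the Bochner step for $\Delta\nabla u$ and in the computation of $\operatorname{div}(\nabla_{\nabla u}X)$ — and making sure these curvature contributions are consistently converted into $\nabla\psi$ and $\nabla\varphi$ terms via \eqref{eq003}. The single genuinely simplifying observation, used twice, is that the trace of the Hessian of $u$ against the skew-symmetric $\varphi$ vanishes; keeping careful track of \emph{which} contraction of $\nabla\varphi$ appears, and matching it against \eqref{eq003}, is what makes the $\nabla\varphi$ terms assemble correctly into the stated identity.
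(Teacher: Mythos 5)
Your argument is correct, but it is organized differently from the paper's. The paper first computes the gradient as a vector identity, $\nabla\langle X,\nabla u\rangle = 2\psi\nabla u - \nabla_{\nabla u}X + \nabla^2 u(X,\cdot)$, using only $\mathcal{L}_X g = 2\psi g$, and then takes the divergence: the term $\operatorname{div}(\nabla_{\nabla u}X)$ in the statement is just the divergence of the middle summand left unexpanded, while $\operatorname{div}(\nabla^2 u(X,\cdot))$ produces the Ricci term that \eqref{eq003} converts into $\nabla\psi$ and $\nabla\varphi$ contributions. You instead expand $\Delta\langle X,\nabla u\rangle$ fully by the Leibniz rule for the rough Laplacian, evaluate all three pieces in closed form (arriving at the reduced identity $\Delta\langle X,\nabla u\rangle=(2-n)\langle\nabla\psi,\nabla u\rangle-2\psi(1+nku)-nk\langle X,\nabla u\rangle$), and only afterwards reintroduce $\operatorname{div}(\nabla_{\nabla u}X)$ by computing it separately and substituting backward; I checked that your formula $\operatorname{div}(\nabla_{\nabla u}X)=\langle\nabla\psi,\nabla u\rangle-\psi(1+nku)-\sum_i\langle\nabla\varphi(E_i,E_i),\nabla u\rangle$ is correct, so the two forms agree. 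Both routes use the same ingredients (the splitting $\nabla_Y X=\psi Y+\varphi Y$, the commutation identity for $\nabla^2u$, \eqref{eq003}, and the vanishing of the trace of a symmetric tensor against a skew one). What your version buys is a fully reduced statement with no $\varphi$ and no unexpanded divergence; what the paper's version buys is that keeping $\operatorname{div}(\nabla_{\nabla u}X)$ intact is exactly what is needed later, since it integrates to $-\int_\Omega\psi|\nabla u|^2\,dV_g$ in the proof of Lemma \ref{lemmapoho}. One small simplification: the identity $\sum_i\langle(\nabla\varphi)(E_i,\nabla u),E_i\rangle=-\sum_i\langle\nabla\varphi(E_i,E_i),\nabla u\rangle$ follows at once from the skew-symmetry of $\nabla_{E_i}\varphi$ (the covariant derivative of a skew-symmetric endomorphism is skew-symmetric); deriving it from \eqref{eq003} works but also requires the pointwise formula of Lemma \ref{lemma 2.1}, so the direct argument is cleaner.
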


\begin{proof}
First, we calculate the gradient of $\langle X, \nabla u\rangle.$ In fact, by considering $Y\in \mathfrak{X}(M),$ we obtain
\begin{eqnarray*}
\langle \nabla \langle X,\nabla u\rangle,Y\rangle &=& Y\langle X,\nabla u\rangle\\
&=&\langle \nabla_Y X,\nabla u\rangle+\langle X,\nabla_Y\nabla u\rangle\\
&=&2\psi g(Y,\nabla u)-\langle Y,\nabla_{\nabla u} X\rangle+\nabla^2u(X,Y),
\end{eqnarray*}
where we have used that $\mathcal{L}_Xg(Y,\nabla u)=2\psi g(Y,\nabla u)=\langle\nabla_YX,\nabla u\rangle+\langle Y,\nabla_{\nabla u}X\rangle.$
Therefore,
$$\nabla \langle X, \nabla u\rangle =2\psi\nabla u -\nabla_{\nabla u}X+\nabla^2u(x,\cdot).$$
This implies that,
\begin{eqnarray*}
\Delta \langle X, \nabla u\rangle &=& \text{div}(\nabla\langle X,\nabla u\rangle)\\
&=&2\psi\Delta u+2\langle \nabla \psi,\nabla u\rangle-\text{div}(\nabla_{\nabla u}X)+\dfrac{1}{2}\langle \nabla^2u,\mathcal{L}_Xg \rangle+\text{div}(\nabla^2u(X,\cdot)).
\end{eqnarray*}
Given $p \in M$, take an orthonormal frame $\left\{E_1, \dots, E_n\right\}$ in a neighbourhood of $p$. Then, using that $\Delta u = -nku -1,$ $\mathcal{L}_Xg=2\psi g,$  $\text{div}(\nabla^2u(X,\cdot))=\langle \nabla \Delta u, X\rangle +Ric(X,\nabla u)$ and \eqref{eq003}, we infer
\begin{eqnarray*}
\Delta \langle X, \nabla u\rangle 
&=&3\psi(-1-nku)+2\langle \nabla \psi,\nabla u\rangle -\text{div}(\nabla_{\nabla u}X)+\langle \nabla \Delta u, X\rangle +Ric(X,\nabla u)\\
&=&3\psi(-1-nku)+2\langle \nabla \psi,\nabla u\rangle -\text{div}(\nabla_{\nabla u}X)-nk\langle \nabla u, X\rangle -(n-1)\langle \nabla \psi, \nabla u \rangle \\
&-&\langle \displaystyle \sum_{i=1}^n\nabla\varphi(E_i,E_i),\nabla u\rangle\\
&=&(3-n)\langle \nabla \psi,\nabla u\rangle -3\psi(1+nku)-\text{div}(\nabla_{\nabla u}X)-\displaystyle\displaystyle \sum_{i=1}^n\langle\nabla \varphi(E_i,E_i),\nabla u\rangle-nk\langle \nabla u,X\rangle.
\end{eqnarray*}
\end{proof}

\begin{remark}
When \(X\) is a closed conformal field, \(\varphi \equiv 0\) and 
\(\operatorname{div}(\nabla_{\nabla u} X) = \langle \nabla \psi, \nabla u \rangle + \psi \Delta u\), so that we obtain again \cite[Lemma 2]{freitas}.
\end{remark}

The next result provides a Pohozaev-type identity as in Proposition \ref{poho_type}.
\begin{lemma}[Pohozaev-type identity] \label{lemmapoho}Under hypothesis of Lemma \ref{eq004}, we have
 \begin{eqnarray*}
\frac{n-2}{2n(n-1)}\displaystyle\int_{\Omega}u^{2}\left(\frac{X(R_{g})}{2}+\psi R_g\right)dV_{g}+\frac{n+2}{n}\displaystyle\int_{\Omega}u\psi  dV_{g}+2k\displaystyle\int_{\Omega}u^{2}\psi dV_{g}=c^{2}\int_{\Omega}\psi dV_{g}. \end{eqnarray*}   
\end{lemma}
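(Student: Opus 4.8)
The plan is to apply Green's second identity to the pair $(u, W)$, where $W := \langle X, \nabla u\rangle$, so that Lemma \ref{eq004} feeds the interior term while the overdetermined data fix the boundary term completely. Concretely, I would start from
\[
\int_\Omega \big(u\,\Delta W - W\,\Delta u\big)\,dV_g = \int_{\partial\Omega}\big(u\,\partial_\nu W - W\,\partial_\nu u\big)\,d\sigma_g .
\]
On $\partial\Omega$ we have $u=0$, so only $-W\,\partial_\nu u$ survives; using $\nu=-\nabla u/\abs{\nabla u}$ and $\abs{\nabla u}=c$ gives $W=-c\,\langle X,\nu\rangle$ and $\partial_\nu u=-c$, whence the boundary integral equals $-c^2\int_{\partial\Omega}\langle X,\nu\rangle\,d\sigma_g=-nc^2\int_\Omega\psi\,dV_g$ after the divergence theorem and $\text{div}X=n\psi$. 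Note that, unlike the Schoen-based derivation in Section \ref{sec_poho}, no conformal change is made here, so $u$ and $W$ are regular up to $\partial\Omega$ and one may integrate directly over $\Omega$ without the $\Omega_\varepsilon$ truncation; this is what makes the argument valid also when $n=2$.

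Next I would substitute the expression for $\Delta W$ from Lemma \ref{eq004} together with $\Delta u=-1-nku$, observing the pleasant cancellation of the $-nku\,W$ term from $u\,\Delta W$ against the $+nku\,W$ term from $-W\,\Delta u$. The remaining interior integrals each reduce by integration by parts, with all boundary contributions vanishing because $u=0$ on $\partial\Omega$. First, $\int_\Omega W\,dV_g=-n\int_\Omega u\psi\,dV_g$ by the divergence theorem. Second, writing $\nabla_{\nabla u}X=\psi\nabla u+\varphi\nabla u$ and discarding $\langle\nabla u,\varphi\nabla u\rangle=0$ by skew-symmetry of $\varphi$, one gets $\int_\Omega u\,\text{div}(\nabla_{\nabla u}X)\,dV_g=-\int_\Omega\langle\nabla u,\nabla_{\nabla u}X\rangle\,dV_g=-\int_\Omega\psi\abs{\nabla u}^2\,dV_g$. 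Third, and most delicately, the $\varphi$-term integrates to zero: setting $F:=\sum_i\nabla\varphi(E_i,E_i)$ (a globally defined trace, by \eqref{eq003}), one has $\int_\Omega u\,\langle F,\nabla u\rangle\,dV_g=\tfrac12\int_\Omega\langle F,\nabla u^2\rangle\,dV_g=-\tfrac12\int_\Omega u^2\,\text{div}F\,dV_g=0$ by \eqref{div_0}. Finally, $\int_\Omega u\,\langle\nabla\psi,\nabla u\rangle\,dV_g=\tfrac12\int_\Omega\langle\nabla\psi,\nabla u^2\rangle\,dV_g=-\tfrac12\int_\Omega u^2\,\Delta\psi\,dV_g$, and the stray $\int_\Omega\psi\abs{\nabla u}^2\,dV_g$ is eliminated using the already-established identity \eqref{eq015}.

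Assembling these reductions and carefully collecting coefficients, the interior integral should collapse to
\[
\frac{n-2}{2}\int_\Omega u^2\,\Delta\psi\,dV_g-(n+2)\int_\Omega u\psi\,dV_g-2nk\int_\Omega u^2\psi\,dV_g .
\]
Equating this to the boundary value $-nc^2\int_\Omega\psi\,dV_g$ and invoking Lemma \ref{lemma1} to substitute $\Delta\psi=-\tfrac{1}{n-1}\big(\tfrac12 X(R_g)+\psi R_g\big)$ then yields the asserted identity after dividing by $n$. I expect the main obstacle to be purely bookkeeping rather than conceptual: the $\langle\nabla\psi,\nabla u\rangle$ contributions arrive with coefficient $(3-n)$ from Lemma \ref{eq004} and interact with the $\tfrac12\int u^2\Delta\psi$ coming from \eqref{eq015}, so one must track the factors of $(n-2)$, $(n+2)$ and $nk$ with care to land on the precise constants $\tfrac{n-2}{2n(n-1)}$, $\tfrac{n+2}{n}$, and $2k$. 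The single genuinely structural point is recognizing that the $\varphi$-term is the divergence of a globally defined vector field and hence drops out via \eqref{div_0}; this is exactly where the present derivation replaces the boundary analysis of the conformal metric used in Proposition \ref{poho_type}.
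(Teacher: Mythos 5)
Your proposal is correct and follows essentially the same route as the paper's own proof of Lemma \ref{lemmapoho} in Appendix \ref{alternative}: Green's identity applied to the pair $u$ and $\langle X,\nabla u\rangle$, substitution of Lemma \ref{eq004} with the $nku\langle X,\nabla u\rangle$ cancellation, the integrations by parts \eqref{eq009}--\eqref{eq014} (including the vanishing of the $\varphi$-term via \eqref{div_0}), and the final elimination of $\int_\Omega \psi|\nabla u|^2\,dV_g$ using Lemma \ref{lemma1}. I checked the coefficient bookkeeping you outline and it closes up exactly as claimed, yielding the constants $\tfrac{n-2}{2n(n-1)}$, $\tfrac{n+2}{n}$ and $2k$.
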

\begin{proof}
In the proof of this lemma, we will work with a fixed family of vector fields over \( \Omega \) to carry out some local computations. Since \( \Omega \) is compact, there exists a collection of bounded domains \( U_1, \dots, U_m \subset M \) such that \( U_{[i]} \cap U_{[i+1]} \neq \emptyset \) for each \( [i] = i \, (\textup{mod } m) \), with \( \Omega \subset \bigcup_{i=1}^m U_i \) and an orthonormal frame $\left\{E^{i}_1, \dots, E^{i}_n\right\}$ defined on each \( U_i \) for \( i \in \{1, \dots, m\} \). Then, for each \( j \in \{1, \dots, n\} \), we define the vector field
\begin{equation}\label{orthonormalSystem}
    E_j (p) =	\left\{	\begin{matrix}
		 E^1_j (p) & \text{ if } & p \in U_1 ,\\[2mm]
			E^{i}_j (p) & \text{ if } & p \in U_i \setminus \left( \bigcup_{l=1}^{i-1} U_l \right), \textup{ for } i =2, \dots, m.
		\end{matrix}\right.
\end{equation}
Although the fields \( E_i \) may not be continuous in \( \Omega \), for any point \( p \in M \), there exists a neighborhood of \( p \) in which the family $\left\{E_1, \dots, E_n\right\}$ defines an orthonormal frame. Thus, by Lemma \ref{eq004}, we obtain that
\begin{eqnarray}\label{eq005}
u\Delta\langle X, \nabla u\rangle-\langle X,\nabla u\rangle \Delta u&=&3u\psi(-1-nku)-(n-3)u\langle \nabla \psi,\nabla u\rangle\nonumber \\
&-&u\text{div}(\nabla_{\nabla u}X)-u\langle \displaystyle \sum_{i=1}^n\nabla\varphi(E_i,E_i),\nabla u\rangle+\langle X,\nabla u\rangle.
\end{eqnarray}
Now, observe that the quantity \( \left\langle \sum_{i=1}^n \nabla \varphi(E_i, E_i), \nabla u \right\rangle \) can be interpreted as the trace of a \((2,0)\)-tensor over \( \Omega \), making it a smooth function on \( \Omega \), despite the vector fields \( E_i \) not being continuous throughout the whole of \( \Omega \). Therefore, we can integrate the equation \eqref{eq005} over \( \Omega \) using the classical Divergence Theorem. First, we analyze the left-hand side of this identity. Observe that
$$u\Delta\langle X, \nabla u\rangle-\langle X,\nabla u\rangle \Delta u = \text{div}(u\nabla\langle X,\nabla u\rangle - \langle X,\nabla u\rangle \nabla u).$$
Since $u=0,$ $\nu=-\dfrac{\nabla u}{|\nabla u|}$ and $|\nabla u|=c$ on $\partial \Omega,$ we get 
\begin{eqnarray}\label{eq006}
\displaystyle\int_{\Omega}(u\Delta\langle X, \nabla u\rangle-\langle X,\nabla u\rangle \Delta u)dV_g = c \displaystyle\int_{\partial \Omega}\langle X,\nabla u\rangle d\sigma_g.
\end{eqnarray}
Furthermore, since \( \text{div} X = n \psi \), because \( X \) is a conformal field with conformal factor \( \psi \), we have
\begin{eqnarray}\label{eq007}
\displaystyle\int_{\partial \Omega}\langle X,\nabla u\rangle d\sigma_g&=& -\displaystyle\int_{\partial \Omega} \langle X, \nu\rangle|\nabla u|d\sigma_g\nonumber\\
 &=&-c\displaystyle\int_{\partial \Omega} \langle X, \nu\rangle d\sigma_g=-c\displaystyle\int_{ \Omega}\text{div} X dV_g=-cn\displaystyle\int_{ \Omega} \psi dV_g.
\end{eqnarray}
Combining \eqref{eq006} and \eqref{eq007}, we infer that
\begin{eqnarray}\label{eq008}
\displaystyle\int_{\Omega}(u\Delta\langle X, \nabla u\rangle-\langle X,\nabla u\rangle \Delta u)dV_g= -c^2n\displaystyle\int_{ \Omega} \psi dV_g.
\end{eqnarray}
Second, after integrating over \(\Omega\) in \eqref{eq005}, we intend to calculate all terms on the right side of this expression in the same way as \cite{freitas}. Note that,
\begin{eqnarray}\label{eq009}
\displaystyle\int_{\Omega}\langle X, \nabla u\rangle dV_g = - \displaystyle\int_{\Omega} u \text{div}(X) dV_g = -n\displaystyle\int_{\Omega} u\psi dV_g,
\end{eqnarray}
where we have used that $\text{div}(uX)=u\text{div}(X)+\langle \nabla u,X\rangle$ and $u=0$ on $\partial \Omega.$
Moreover, 
\begin{eqnarray}\label{eq010}
\displaystyle\int_{\Omega}u\langle \nabla \psi, \nabla u\rangle dV_g= -\dfrac{1}{2}\displaystyle\int_{\Omega}u^2\Delta \psi dV_g,
\end{eqnarray}
because $\text{div}(u^2\nabla \psi)=u^2\Delta\psi+2u\langle \nabla u,\nabla \psi\rangle$  and $u=0$ on $\partial \Omega.$
We recall that from Lemma \ref{lemma 2.1} we have 
\begin{eqnarray}\label{eq011}
-(n-1)\Delta \psi = \dfrac{1}{2}X(R_g)+\psi R_g.
\end{eqnarray}
By equations \eqref{eq010} and \eqref{eq011}, we infer 
\begin{eqnarray}\label{eq012}
\displaystyle\int_{\Omega}u\langle\nabla u, \nabla \psi\rangle dV_g= \dfrac{1}{2(n-1)}\displaystyle\int_{\Omega}u^2\left(\dfrac{1}{2}X(R_g)+\psi R_g\right)dV_g.
\end{eqnarray}
Moreover, $\text{div}(u\nabla_{\nabla u}X)=u \text{div}(\nabla_{\nabla u}X)+\langle \nabla u, \nabla_{\nabla u}X\rangle=u \text{div}(\nabla_{\nabla u}X)+\psi|\nabla u|^2.$
In last equality, we use that $\mathcal{L}_Xg(\nabla u,\nabla u)=2\langle \nabla_{\nabla u}X,\nabla u\rangle=2\psi|\nabla u|^2. $ This implies that,
\begin{eqnarray}\label{eq013}
\displaystyle\int_{\Omega}u\text{div}(\nabla_{\nabla u}X)dV_g=-\displaystyle\int_{\Omega}\psi|\nabla u|^2 dV_g.
\end{eqnarray}
We note that
\begin{eqnarray}\label{eq014}
\displaystyle\int_{\Omega}u\langle\displaystyle\sum \nabla \varphi(e_i,e_i),\nabla u\rangle dV_g=-\dfrac{1}2\displaystyle\int_{\Omega}u^2\text{div}\left(\displaystyle \sum_{i=1}^n\nabla\varphi(e_i,e_i)\right)dV_g=0,
\end{eqnarray}
where we have used that $\text{div}\left(u^2\sum(\nabla\varphi(e_i,e_i)\right)=u^2\text{div}(\sum\nabla\varphi(e_i,e_i))+2u\langle\nabla u, \sum\nabla \varphi(e_i,e_i) \rangle$ and that $\text{div}\left(\displaystyle \sum_{i=1}^n\nabla\varphi(e_i,e_i)\right)=0$ because of \eqref{div_0}. Finally, joining the terms, we conclude that
\begin{eqnarray}\label{prev_poho}
\dfrac{(n+3)}{n}\int_{\Omega}u\psi   dV_g&=&c^2\displaystyle\int_{\Omega}\psi dV_g+\dfrac{(3-n)}{2n(n-1)}\displaystyle\int_{\Omega}u^2\left(\dfrac{X(R_g)}{2}+\psi R_g\right)dV_g\nonumber\\
& &-3k\displaystyle\int_{\Omega}\psi u^2 dV_g+\dfrac{1}{n}\displaystyle\int_{\Omega}\psi|\nabla u|^2 dV_g.
\end{eqnarray}
To obtain the expression as stated in the lemma, it is sufficient to focus on the last integral term. Indeed, by applying the Divergence Theorem, we have
\begin{eqnarray*}
\frac{1}{n}\int_{\Omega}\psi |\nabla u|^{2}dV_g &=& \frac{1}{n}\left[-\int_{\Omega} \psi u \Delta u + \frac{1}{2}\int_{\Omega} u^{2} \Delta \psi \right]dV_g \\
&=& \frac{1}{n} \int_{\Omega} \psi u dV_g + k \int_{\Omega} \psi u^{2} dV_g - \frac{1}{2n} \int_{\Omega} u^{2} \Delta \psi dV_g \\
&=& \frac{1}{n} \int_{\Omega} \psi u dV_g+ k \int_{\Omega} \psi u^{2} dV_g- \frac{1}{2n(n-1)} \int_{\Omega} u^2 \left( \frac{X(R_g)}{2} + \psi R_g \right)dV_g,
\end{eqnarray*}
where we used \eqref{serrinproblem} and Lemma \ref{lemma1}. Substituting this into \eqref{prev_poho}, we obtain the desired expression.
\end{proof}

\section*{Data availability statement}
This manuscript has no associated data.

\section*{Conflict of interest statement}
On behalf of all authors, the corresponding author states that there is no conflict of interest.

\section*{Acknowledgments}
The authors would like to thank Luciano Mari for him discussion about the object of this paper and several valuable suggestions.

The first and second authors would like to thank the hospitality of the Mathematics Department of  Università degli Studi di Torino, where part of this work was carried out (there, Andrade was suported by CIMPA/ICTP Research in Pairs Program 2023 and Freitas was supported by CNPq/Brazil Grant 200261/2022-3). 
The first author also has been partially supported by Brazilian National Council for Scientific and Technological Development (CNPq Grant 403349/2021-4 and CNPq Grant 408834/2023-4) and FAPITEC/SE/Brazil (Grant 019203.01303/2021-1). 

The second author has also been partially supported by CNPq/Brazil Grant 316080/2021-7, by the public call n.~03 Produtividade em Pesquisa proposal code PIA13495-2020 and Programa Primeiros Projetos, Grant 2021/3175-FAPESQ/PB and MCTIC/CNPq.

The third author is partially supported by the {\it Maria de Maeztu} Excellence Unit IMAG, reference CEX2020-001105-M, funded by MCINN/AEI/10.13039/ 501100011033/CEX2020-001105-M and Spanish MIC Grant PID2020-117868GB-I00.

\end{document}